\newcommand{\beq}{\begin{equation}}
\newcommand{\eeq}{\end{equation}}
\newcommand{\bea}{\begin{eqnarray}}
\newcommand{\eea}{\end{eqnarray}}
\newcommand{\beas}{\begin{eqnarray*}}
\newcommand{\M}{\mathbb M}
\newcommand{\Ho}{\mathcal H}
\newcommand{\V}{\mathcal V}
\newcommand{\W}{\mathcal W}
\newcommand{\eeas}{\end{eqnarray*}}
\newcommand{\Var}{\textrm{Var}}
\newcommand{\Ent}{\textrm{Ent}}
\newcommand{\ve}{\varepsilon}
\newtheorem{theorem}{Theorem}[section]
\newtheorem{assumption}[theorem]{Assumption}
\newtheorem{proposition}[theorem]{Proposition}
\newtheorem{corollary}[theorem]{Corollary}
\newtheorem{lemma}[theorem]{Lemma}
\newtheorem{remark}[theorem]{Remark}
\newcommand{\R}[1]{\mathbb{R}}     % absolute value
\newcommand{\bM}{\mathbb{M}}
\newcommand{\ro}{\tilde{\rho}}
\title[Curvature-dimension inequalities]{Curvature-dimension estimates for the Laplace-Beltrami operator of a totally geodesic foliation}
\author{Fabrice Baudoin}
\address{Department of Mathematics\\Purdue University \\
West Lafayette, IN 47907 \\
USA
} \email[Fabrice Baudoin]{fbaudoin@purdue.edu}
\author{Michel Bonnefont}
\address{Institut de Math\'ematiques de Bordeaux \\
Universit\'e Bordeaux 1 \\
33405 Talence \\
FRANCE 
} \email[Michel Bonnefont]{michel.bonnefont@math.u-bordeaux1.fr}
\begin{document}

\maketitle

\begin{abstract}
We study Bakry-\'Emery type estimates for the Laplace-Beltrami operator of a totally geodesic foliation. In particular, we are interested in situations for which the $\Gamma_2$ operator may not be bounded from below but the horizontal Bakry-\'Emery curvature is. As we prove it, under a bracket generating condition, this weaker condition is enough to imply several  functional inequalities for the heat semigroup including the Wang-Harnack inequality and the log-Sobolev inequality. We also prove that, under proper additional assumptions, the generalized curvature dimension inequality introduced by Baudoin-Garofalo \cite{BG} is uniformly satisfied for a family of Riemannian metrics that  converge to the sub-Riemannian one.
\end{abstract}

\tableofcontents

\section{Introduction}

In the recent few years, there have been several works using Riemannian geometry tools to study sub-Laplacians. We refer to the survey \cite{survey} for an overview of those techniques. In the present work we somehow take the opposite stance and show how sub-Riemannian geometry can be used to study Laplacians.  More precisely, we shall be interested in the Laplace-Beltrami operator of a Riemannian foliation with totally geodesic leaves. Our main assumption will be that the horizontal distribution of the foliation is bracket-generating and the horizontal Bakry-\'Emery curvature of the Laplace-Beltrami operator  is bounded from below. However, we will not assume anything on the vertical Bakry-\'Emery curvature. As a consequence the $\Gamma_2$ operator does not need to be bounded from below. Surprisingly, even  under this weak condition, we are still able to obtain several important functional inequalities for the heat semigroup. We mention in particular the Wang-Harnack inequality and the associated criterion for the log-Sobolev inequality.

\

 In the second part of our work, we show that the Baudoin-Garofalo generalized curvature dimension inequality \cite{BG} for the horizontal Laplacian of the foliation can be seen as a uniform limit of curvature dimension estimates for the Laplace-Beltrami operator of the canonical variations of the metric of the foliation.
 
 \
 
 The paper is organized as follows. In Section 2 we introduce the geometric setting and establish Bochner's identities for the Laplace-Beltrami operator of a Riemannian foliation with totally geodesic leaves. These formulas are new and interesting in themselves. The main geometric novelty in those formulas is to separate the Bakry-\'Emery curvature of the Laplacian into two parts: an horizontal part and a vertical part.
 
 \
 
 In Section 3 we assume that the horizontal Bakry-\'Emery curvature is bounded from below and deduce several gradient bounds for the diffusion semigroup.  We are able to prove the Wang-Harnack inequality and deduce from it a criterion for the log-Sobolev inequality.
 
 \

In Section 4, we show that if additionally the vertical Bakry-\'Emery curvature is bounded from below, we get a uniform  family of curvature-dimension estimates for a one-parameter of Laplacians. This curvature dimension inequalities interpolate between the classical Bakry-\'Emery curvature dimension condition and the Baudoin-Garofalo curvature dimension inequality introduced in \cite{BG}.

\section{Bochner identities for the Laplace-Beltrami operator of a totally geodesic foliation}

The goal of the section will be to prove Bochner's type identities  for the Laplace-Beltrami operator of a totally geodesic Riemannian foliation. We refer to \cite {survey} for a detailed account about the geometry of such foliations.

\

We consider a smooth $n+m$ dimensional connected manifold $\M$ which is equipped with a Riemannian foliation with a complete bundle like  metric $g$ and totally geodesic leaves. We moreover assume that the metric $g$ is complete and that the horizontal distribution $\mathcal{H}$ of the foliation is Yang-Mills (see \cite{survey}).  We shall also assume that $\Ho$ is bracket-generating.

\

The $m$ dimensional sub-bundle $\mathcal{V}$ formed by vectors tangent to the leaves is referred  to as the set of \emph{vertical directions}. The sub-bundle $\mathcal{H}$ which is normal to $\mathcal{V}$ is referred to as the set of \emph{horizontal directions}.   The metric $g$ can be split as
\[
g=g_\mathcal{H} \oplus g_{\mathcal{V}},
\]
and  we introduce the one-parameter family of Riemannian metrics:
\[
g_{\varepsilon}=g_\mathcal{H} \oplus  \frac{1}{\varepsilon }g_{\mathcal{V}}, \quad \varepsilon >0.
\]
It is called the canonical variation of $g$. The Riemannian distance associated with $g_{\varepsilon}$ will be denoted by $d_\varepsilon$. Finally we denote by $\mu_\varepsilon$ the Riemannian  volume associated to $g_\varepsilon$.

\

The Laplace-Beltrami operator of the metric $g_\varepsilon$ is given by
\[
\Delta_\varepsilon =\Delta_{\mathcal{H}}+\varepsilon \Delta_{\mathcal{V}}, 
\]
where $\Delta_{\mathcal{H}}$ is the horizontal Laplacian of the foliation and $ \Delta_{\mathcal{V}}$ the vertical Laplacian.

The Bott connection which is defined in terms of the Levi-Civita connection $D$ of the metric $g$ by
\[
\nabla_X Y =
\begin{cases}
 ( D_X Y)_{\mathcal{H}} , \quad X,Y \in \Gamma^\infty(\mathcal{H}) \\
 [X,Y]_{\mathcal{H}}, \quad X \in \Gamma^\infty(\mathcal{V}), Y \in \Gamma^\infty(\mathcal{H}) \\
 [X,Y]_{\mathcal{V}}, \quad X \in \Gamma^\infty(\mathcal{H}), Y \in \Gamma^\infty(\mathcal{V}) \\
 ( D_X Y)_{\mathcal{V}}, \quad X,Y \in \Gamma^\infty(\mathcal{V})
\end{cases}
\]
where the subscript $\Ho$  (resp. $\mathcal{V}$) denotes the projection on $\mathcal{H}$ (resp. $\mathcal{V}$). Let us observe that for horizontal vector fields $X,Y$ the torsion $T(X,Y)$ is given by
\[
T(X,Y)=-[X,Y]_\V.
\]
Also observe that for $X,Y \in \Gamma^\infty(\mathcal{V})$ we actually have  $( D_X Y)_{\mathcal{V}}= D_X Y$ because the leaves are assumed to be totally geodesic. Finally,  it is easy to check that for every $\varepsilon >0$, the Bott connection satisfies $\nabla g_\varepsilon=0$. The horizontal Ricci curvature of $\nabla$ will be denoted $\mathbf{Ricci}_\mathcal{H}$ and the Ricci curvature of the leaves will be denoted $\mathbf{Ricci}_\mathcal{V}$.

For $Z \in \Gamma^\infty(T\M)$, there is a  unique skew-symmetric endomorphism  $J_Z:\mathcal{H}_x \to \mathcal{H}_x$ such that for all horizontal vector fields $X$ and $Y$,
\begin{align}\label{Jmap}
g_\mathcal{H} (J_Z (X),Y)= g_\mathcal{V} (Z,T(X,Y)).
\end{align}
where $T$ is the torsion tensor of $\nabla$. We then extend $J_{Z}$ to be 0 on  $\mathcal{V}_x$.  If $Z_1,\cdots,Z_m$ is a local vertical frame, the operator $\sum_{l=1}^m J_{Z_l}J_{Z_l}$ does not depend on the choice of the frame and is denoted by $\mathbf{J}^2$. For instance, if $\M$ is a K-contact manifold equipped with the Reeb foliation, then $\mathbf{J}$ is an almost complex structure, $\mathbf{J}^2=-\mathbf{Id}_{\mathcal{H}}$.

A simple computation (see for instance Theorem 9.70, Chapter 9 in \cite{Besse}) gives the following result for the Riemannian Ricci curvature of the metric $g_\varepsilon$.

\begin{lemma}\label{Ric}
Let us denote by $\mathbf{Ricci}_\varepsilon$ the Ricci curvature tensor of the Levi-Civita connection of the metric $g_\varepsilon$, then for every $X \in \Gamma^\infty(\mathcal{H})$ and $Z \in \Gamma^\infty(\mathcal{V})$,
\[
\mathbf{Ricci}_\varepsilon (Z,Z)=\mathbf{Ricci}_\mathcal{V} (Z,Z)+\frac{1}{4\varepsilon^2}\mathbf{Tr} ( J_Z^* J_Z)
\]
\[
\mathbf{Ricci}_\varepsilon (X,Z)=0
\]
\[
\mathbf{Ricci}_\varepsilon (X,X)=\mathbf{Ricci}_\mathcal{H} (X,X)-\frac{1}{2\varepsilon} \| \mathbf{J} X \|^2.
\]
\end{lemma}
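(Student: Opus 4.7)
The plan is to derive the three stated identities from the classical O'Neill formulas for the Ricci curvature of a Riemannian submersion with totally geodesic fibers (see \cite{Besse}, Chapter 9), applied to the submersion locally defined by the leaf space, and then to track how each tensor scales under the canonical variation $g_\varepsilon = g_\mathcal{H} \oplus \frac{1}{\varepsilon} g_\mathcal{V}$.

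The first ingredient is to relate the skew-symmetric endomorphism $\mathbf{J}$ introduced in (\ref{Jmap}) to O'Neill's integrability tensor $A$. For horizontal vector fields $X,Y$ one has $A_X Y = \frac{1}{2}[X,Y]_\mathcal{V}$, so the torsion of the Bott connection reads $T(X,Y) = -[X,Y]_\mathcal{V} = -2A_X Y$. Substituting into (\ref{Jmap}) yields $g_\mathcal{H}(J_Z X, Y) = -2\, g_\mathcal{V}(A_X Y, Z)$, which identifies $\|\mathbf{J} X\|^2$ and $\mathbf{Tr}(J_Z^* J_Z)$ as explicit numerical multiples of the two natural traces of $|A|^2$ that appear in O'Neill's formulas (one in which $X$ is fixed and one in which $Z$ is fixed).

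Once this dictionary is set, the case $\varepsilon=1$ of the lemma is essentially Theorem~9.70 of \cite{Besse}: the vanishing of O'Neill's second fundamental form of the fibers (equivalent to total geodesicity of the leaves) kills one family of terms, while the Yang-Mills assumption on $\mathcal{H}$ is precisely the statement that the horizontal divergence of $A$ vanishes, which accounts for $\mathbf{Ricci}_\varepsilon(X,Z)=0$. For a general $\varepsilon>0$, the key observation is that $(\M, g_\varepsilon)$ is still a bundle-like totally geodesic Riemannian foliation with the same horizontal distribution, and that the Bott connection is unchanged with $\varepsilon$ since $\nabla g_\varepsilon = 0$ for every $\varepsilon$. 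Hence the only $\varepsilon$-dependence comes from the change of orthonormal vertical frame: a $g$-orthonormal vertical frame $Z_1,\dots,Z_m$ becomes $\sqrt{\varepsilon}\,Z_1,\dots,\sqrt{\varepsilon}\,Z_m$ when viewed as $g_\varepsilon$-orthonormal, and every $g$-adjoint involving a vertical factor is rescaled accordingly.

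Plugging these rescaled quantities into the three O'Neill traces produces the advertised factors $\frac{1}{2\varepsilon}$ on the horizontal side and $\frac{1}{4\varepsilon^2}$ on the vertical side. The main obstacle, which is the only real content of the proof once the $\mathbf{J}$-vs-$A$ dictionary is established, is a careful bookkeeping of where each factor of $\varepsilon$ or of $2$ enters: one must consistently distinguish the $g$-adjoints from the $g_\varepsilon$-adjoints, remember that $\mathbf{Ricci}_\varepsilon(Z,Z)$ is computed by tracing over a $g_\varepsilon$-orthonormal horizontal frame (which equals the $g$-orthonormal one on $\mathcal{H}$) while the vertical pairings are rescaled, and use the skew-symmetry of $\mathbf{J}$ to rewrite $\mathbf{Tr}(J_Z^* J_Z) = \sum_i \|J_Z e_i\|^2$ for a horizontal frame $\{e_i\}$. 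Once this numerology is pinned down, the three identities drop out immediately.
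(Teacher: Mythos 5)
Your proposal is correct and follows essentially the same route as the paper, which simply invokes Theorem~9.70 of \cite{Besse} for the canonical variation of a submersion metric with totally geodesic fibers; your dictionary $g_\mathcal{H}(J_Z X,Y)=-2g_\mathcal{V}(A_XY,Z)$, the use of the Yang--Mills condition $\delta A=0$ for the vanishing of the mixed term, and the $\varepsilon$-scaling of the vertical frame are exactly the ingredients needed to translate O'Neill's formulas into the stated identities. If anything, your write-up supplies more detail than the paper, which records no computation at all.
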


We explicitly note that $\mathbf{Ricci}_\varepsilon (X,Z)=0$ is due to the fact that the foliation is assumed to be of Yang-Mills type.

We denote 
\[
\Gamma_2^\mathcal H (f) = \frac{1}{2} \Delta_{\mathcal{H}} \| \nabla_\mathcal H f\|^2 -\langle \nabla_{\mathcal{H}} \Delta_{\mathcal{H}} f, \nabla_{\mathcal{H}} f\rangle.
\]
Our first results are Bochner's type identities for the operator $\Delta_\varepsilon$.

\begin{proposition}\label{boch}
For every $f \in C^\infty(\M)$,
\[
\frac{1}{2} \Delta_{\varepsilon} \| \nabla_\mathcal H f\|^2 -\langle \nabla_{\mathcal{H}} \Delta_{\varepsilon} f, \nabla_{\mathcal{H}} f\rangle=\Gamma_2^\mathcal H (f) +\varepsilon \| \nabla_{\V,\Ho}^2 f \|^2
\]
and
\[
\frac{1}{2} \Delta_{\varepsilon} \| \nabla_\mathcal V f\|^2 -\langle \nabla_{\mathcal{V}} \Delta_{\varepsilon} f, \nabla_{\mathcal{V}} f\rangle=\varepsilon  \| \nabla_{\V}^2 f \|^2  + \varepsilon \mathbf{Ric}_\V (\nabla_\V f , \nabla_\V f) + \| \nabla_{\Ho,\V}^2 f \|^2
\]
\end{proposition}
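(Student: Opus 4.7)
The plan is to exploit the linearity $\Delta_\varepsilon = \Delta_\Ho + \varepsilon \Delta_\V$ in order to split each of the two identities into pieces that are either a definition, a well-known Riemannian Bochner identity applied to a leaf, or a ``mixed'' Bochner identity that has to be proved directly. Indeed, substituting $\Delta_\varepsilon$ into the first identity and subtracting $\Gamma_2^\Ho(f)$, which is exactly what the $\Delta_\Ho$-piece produces by definition, reduces the first identity to the mixed Bochner identity
\[
\tfrac{1}{2}\Delta_\V \|\nabla_\Ho f\|^2 - \langle \nabla_\Ho \Delta_\V f, \nabla_\Ho f\rangle = \|\nabla^2_{\V,\Ho} f\|^2,
\]
while the same substitution reduces the second identity to
\[
\tfrac{1}{2}\Delta_\Ho \|\nabla_\V f\|^2 - \langle \nabla_\V \Delta_\Ho f, \nabla_\V f\rangle = \|\nabla^2_{\Ho,\V} f\|^2
\]
together with the leaf Bochner identity
\[
\tfrac{1}{2}\Delta_\V \|\nabla_\V f\|^2 - \langle \nabla_\V \Delta_\V f, \nabla_\V f\rangle = \|\nabla^2_\V f\|^2 + \mathbf{Ricci}_\V(\nabla_\V f, \nabla_\V f).
\]
The leaf identity is just the classical Riemannian Bochner formula, legitimate here because the totally geodesic hypothesis ensures that the Bott connection restricts to the Levi--Civita connection of the leaves and that $\Delta_\V$ acts leafwise as the intrinsic Laplace--Beltrami operator.

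For the two mixed identities I fix an arbitrary point $x_0 \in \M$ and choose local orthonormal frames $\{X_1,\dots,X_n\}$ of $\Ho$ and $\{Z_1,\dots,Z_m\}$ of $\V$ that are \emph{synchronous at $x_0$} for the Bott connection, meaning that all Bott covariant derivatives $\nabla_{X_i} X_j$, $\nabla_{Z_k} Z_l$, $\nabla_{X_i} Z_l$, and $\nabla_{Z_l} X_i$ vanish at $x_0$. Such frames exist since $\nabla$ preserves the splitting $T\M = \Ho \oplus \V$ and is metric on each factor. At $x_0$ one then has $\Delta_\V f = \sum_l Z_l^2 f$ and $\|\nabla_\Ho f\|^2 = \sum_i (X_i f)^2$, and a direct Leibniz-rule expansion yields
\[
\bigl[\tfrac{1}{2}\Delta_\V \|\nabla_\Ho f\|^2 - \langle \nabla_\Ho \Delta_\V f, \nabla_\Ho f\rangle\bigr](x_0) = \sum_{l,i}(Z_l X_i f)^2(x_0) + \sum_{l,i}(X_i f)\,[Z_l^2, X_i]f(x_0).
\]
Using the Bott formulas and the torsion computation $T(X_i, Z_l) = 0$, the bracket $[Z_l, X_i] = \nabla_{Z_l} X_i - \nabla_{X_i} Z_l$ vanishes at $x_0$ in the synchronous frame, so the commutator trace reduces to a first-order correction whose vanishing is exactly the content of the Yang--Mills hypothesis: by Lemma \ref{Ric}, Yang--Mills is equivalent to $\mathbf{Ricci}_\varepsilon(X,Z) = 0$, which is precisely the trace that would otherwise survive. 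The leftover $\sum_{l,i}(Z_l X_i f)^2(x_0)$ equals $\|\nabla^2_{\V,\Ho} f\|^2(x_0)$ in the synchronous frame, proving the first mixed identity at $x_0$; the second is obtained by a symmetric computation, using $T(X_i, Z_l) = 0$ to exchange the order of differentiation.

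The main obstacle is the bookkeeping of the commutator and Christoffel-derivative terms that arise when one opens $\Delta_\V$ and $\Delta_\Ho$ in the mixed computation, and locating precisely where the Yang--Mills hypothesis must be invoked to force the residual curvature-type traces to cancel. Once these cancellations are verified, reassembling the three identities above via $\Delta_\varepsilon = \Delta_\Ho + \varepsilon \Delta_\V$ yields both Bochner identities of the proposition.
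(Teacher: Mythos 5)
Your overall strategy coincides with the paper's: split via $\Delta_\varepsilon=\Delta_\Ho+\varepsilon\Delta_\V$ into the definition of $\Gamma_2^\Ho$, the leaf Bochner formula, and the two mixed identities, then compute the mixed identities in an adapted frame at a point. The gap is in the justification of the crucial cancellation. In a frame of \emph{basic} horizontal fields (so that $[X_i,Z_l]=\sum_j\beta_{il}^jZ_j$ is vertical), the correction term in the first mixed identity is $\sum_{i,l}(X_if)[Z_l^2,X_i]f$, which at the center of the frame equals $-\sum_{i,j,l}(X_if)\,(Z_l\beta_{il}^j)\,(Z_jf)$: a \emph{vertical} divergence of the mixed connection coefficients. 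You assert that its vanishing ``is exactly the content of the Yang--Mills hypothesis'' and that it is ``precisely the trace'' $\mathbf{Ricci}_\varepsilon(X,Z)$ of Lemma \ref{Ric}. That identification is not correct: the Yang--Mills condition is the vanishing of the \emph{horizontal} divergence of the torsion, i.e.\ it controls expressions of the form $\sum_i X_i\gamma_{ij}^k$ built from $[X_i,X_j]_\V$, and this is what kills $\mathbf{Ricci}_\varepsilon(X,Z)$; it says nothing about $\sum_l Z_l\beta_{il}^j$. The mechanism the paper actually relies on is total geodesicity: locally the foliation is a Riemannian submersion with totally geodesic fibers, the flow of a basic field maps fibers isometrically onto fibers, hence $[\Delta_\V,X_i]=0$ on functions and the correction vanishes identically (equivalently, it is the skew-symmetry $\beta_{il}^j=-\beta_{ij}^l$ that does the work). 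Note also that your ``synchronous Bott frame'' does not make the $X_i$ basic, so $[Z_l,X_i]$ may acquire a horizontal component and even the displayed form of your correction term is not justified as written.

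The same misattribution affects your second mixed identity: there the correction is $\sum_j(Z_jf)[\Delta_\Ho,Z_j]f=\sum_{i,j,k}(X_i\beta_{ij}^k)(Z_jf)(Z_kf)$ at the center, and it vanishes because the matrix $(j,k)\mapsto\sum_iX_i\beta_{ij}^k$ is skew-symmetric (again the totally geodesic hypothesis), not because $T(X_i,Z_l)=0$ lets you ``exchange the order of differentiation.'' The remaining ingredients of your argument --- the reduction via linearity, the leaf Bochner formula, and the identification of the surviving square sums with $\|\nabla^2_{\V,\Ho}f\|^2$ and $\|\nabla^2_{\Ho,\V}f\|^2$ --- agree with the paper and are fine; what is missing is the correct reason why the first-order commutator corrections disappear.
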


\begin{proof} Since the statement is local, we can assume that the Riemannian foliation comes from  a Riemannian submersion with totally geodesic fibers. We fix  $x \in \M$ throughout the proof and prove the identities at the point $x$.

Let $X_1,\cdots, X_n$ be a local orthonormal horizontal frame around $x$ consisting of basic vector fields for the submersion. We can assume that, at $x$, $\nabla_{X_i} X_j =0$. Let now $Z_1,\cdots,Z_m$ be a local orthonormal vertical frame around $x$, such that at $x$, $\nabla_{Z_l}Z_m=0$. Since $X_i$ is basic, the vector field $[X_i,Z_m]$ is tangent to the leaves. We  write the structure constants in that local frame:
\[
[X_i,X_j]=\sum_{k=1}^n \omega_{ij}^k X_k +\sum_{k=1}^m \gamma_{ij}^k Z_k
\]
\[
[X_i,Z_k]=\sum_{j=1}^m \beta_{ik}^j Z_j,
\]
and observe that at the center $x$ of the frame, we have $\omega_{ij}^k =0$. Moreover, since  the submersion has totally geodesic fibers we have the skew-symmetry,
\[
\beta_{ik}^j =-\beta_{ij}^k.
\]
We can also assume that, at the center $x$, $\beta_{ij}^k=0$. 

Observe that we have at $x$
\[
\Delta_{\mathcal H} =\sum_{i=1}^n X_i^2 
\]
and
\[
\Delta_{\mathcal V}= \sum_{j=1}^m Z_j^2.
\]
We obviously have
\[
\frac{1}{2} \Delta_{\varepsilon} \| \nabla_\mathcal H f\|^2 -\langle \nabla_{\mathcal{H}} \Delta_{\varepsilon} f, \nabla_{\mathcal{H}} f\rangle=\Gamma_2^\mathcal H (f) +\varepsilon  \left( \frac{1}{2} \Delta_{\mathcal V} \| \nabla_\mathcal H f\|^2 -\langle \nabla_{\mathcal{H}} \Delta_{\mathcal V} f, \nabla_{\mathcal{H}} f\rangle \right).
\]
So, we have to prove that
\[
 \frac{1}{2} \Delta_{\mathcal V} \| \nabla_\mathcal H f\|^2 -\langle \nabla_{\mathcal{H}} \Delta_{\mathcal V} f, \nabla_{\mathcal{H}} f\rangle=\| \nabla_{\V,\Ho}^2 f \|^2.
\]
At the center $x$ of the frame, we easily see that
\begin{align*}
 \frac{1}{2} \Delta_{\mathcal V} \| \nabla_\mathcal H f\|^2 -\langle \nabla_{\mathcal{H}} \Delta_{\mathcal V} f, \nabla_{\mathcal{H}} f\rangle & =\sum_{i=1}^n \sum_{j=1}^m (Z_m X_i f)^2 +\sum_{i=1}^n (X_i f) [ \Delta_{\mathcal V} ,X_i]f \\
  &=\sum_{i=1}^n \sum_{j=1}^m (Z_m X_i f)^2 \\
  &=\| \nabla_{\V,\Ho}^2 f \|^2.
\end{align*}
The second identity is proved in a siimilar way. We have
\[
\frac{1}{2} \Delta_{\varepsilon} \| \nabla_\mathcal V f\|^2 -\langle \nabla_{\mathcal{V}} \Delta_{\varepsilon} f, \nabla_{\mathcal{V}} f\rangle=\frac{1}{2} \Delta_{\mathcal H} \| \nabla_\mathcal V f\|^2 -\langle \nabla_{\mathcal{V}} \Delta_{\mathcal H} f, \nabla_{\mathcal{V}} f\rangle +\varepsilon \left(\frac{1}{2} \Delta_{\mathcal V} \| \nabla_\mathcal V f\|^2 -\langle \nabla_{\mathcal{V}} \Delta_{\mathcal V} f, \nabla_{\mathcal{V}} f\rangle \right).
\]
From the Bochner's identity on the leaves, we have
\[
 \frac{1}{2} \Delta_{\mathcal V} \| \nabla_\mathcal V f\|^2 -\langle \nabla_{\mathcal{V}} \Delta_{\mathcal V} f, \nabla_{\mathcal{V}} f\rangle= \| \nabla_{\V}^2 f \|^2  +  \mathbf{Ric}_\V (\nabla_\V f , \nabla_\V f) 
\]
and it is easy to see that in the local frame, at $x$ we have
\begin{align*}
\frac{1}{2} \Delta_{\mathcal H} \| \nabla_\mathcal V f\|^2 -\langle \nabla_{\mathcal{V}} \Delta_{\mathcal H} f, \nabla_{\mathcal{V}} f\rangle & = \sum_{i=1}^n \sum_{j=1}^m (X_i Z_m f)^2 \\
 &= \| \nabla_{\Ho,\V}^2 f \|^2.
\end{align*}
It is worth pointing out that
\[
 \| \nabla_{\Ho,\V}^2 f \|^2=\| \nabla_{\V,\Ho}^2 f \|^2,
\]
since at the center of the frame
\[
\sum_{i=1}^n \sum_{j=1}^m (Z_m X_i f)^2=\sum_{i=1}^n \sum_{j=1}^m (X_i Z_m f)^2.
\]
\end{proof}

% \section{A preliminary gradient bound}
% 
% In this section, we assume that for every  $X \in \Gamma^\infty(\mathcal{H})$,
% \[
% \mathbf{Ricci}_\mathcal{H} (X,X) \ge \rho_1 \| X \|^2, \quad  \| \mathbf{J} X \|^2 \le \kappa \| X \|^2.
% \]
% We denote by $P_t^\varepsilon$ the diffusion semigroup generated by $\Delta_\varepsilon$.
% 
% \begin{proposition}
% For every $f \in C_0^\infty (\M)$, $t \ge 0$,
% \[
% \| \nabla_\mathcal H P_t^\varepsilon f \| \le e^{\left( \rho_1 -\frac{\kappa}{2\varepsilon}\right)t} P_t \left( \sqrt{ \| \nabla_\mathcal H f \|^2  +\varepsilon  \| \nabla_\mathcal V f \|^2 }\right)
% \]
% \end{proposition}

\section{Functional inequalities with the horizontal gradient}

In this section, in addition to the assumptions of previous section, we also assume that for every  $X \in \Gamma^\infty(\mathcal{H})$,
\[
\mathbf{Ricci}_\mathcal{H} (X,X) \ge \rho_1 \| X \|^2, \quad  \| \mathbf{J} X \|^2 \le \kappa \| X \|^2.
\]
In particular, according to Lemma \ref{Ric}, we have for every $\varepsilon >0$ and horizontal vector field $X$
\[
\mathbf{Ricci}_\varepsilon (X,X) \ge \left( \rho_1 - \frac{\kappa}{ 2\varepsilon} \right),
\]
with $\kappa \ge 0$ and $\rho_1 \ge 0$. However, no assumption is made on $\mathbf{Ricci}_\varepsilon (Z,Z)$ when $Z$ is vertical

We fix $\varepsilon >0$ in the sequel and consider
\[
\mathcal{T}_2(f)=\frac{1}{2} \Delta_{\varepsilon}  \| \nabla_\mathcal{H} f \|^2 -\langle \nabla_\Ho  \Delta_\varepsilon f, \nabla_\Ho f \rangle.
\]

\begin{proposition}
For every $f \in C^\infty(\M)$,
\begin{align}\label{CD8}
\mathcal{T}_2(f) \ge \left( \rho_1 -\frac{\kappa}{\varepsilon} \right)  \| \nabla_\mathcal{H} f \|^2
\end{align}
\end{proposition}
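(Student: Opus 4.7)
\emph{Plan of proof.} By Proposition \ref{boch},
$$\mathcal{T}_2(f) = \Gamma_2^\Ho(f) + \varepsilon \|\nabla_{\V,\Ho}^2 f\|^2,$$
so it suffices to find a lower bound on $\Gamma_2^\Ho(f)$ whose negative error does not exceed $\varepsilon\|\nabla_{\V,\Ho}^2 f\|^2$. My plan is to combine a Bochner identity for the horizontal sub-Laplacian with a weighted Cauchy-Schwarz inequality.

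Working in the adapted local frame of the proof of Proposition \ref{boch}, I would expand $[X_i^2, X_j]f$ through the brackets $[X_i, X_j]=\sum_k \omega_{ij}^k X_k + \sum_k \gamma_{ij}^k Z_k$. The vanishings of $\omega_{ij}^k$ and $\beta_{ij}^k$ at the center $x$, combined with the Yang-Mills hypothesis (which kills the contribution of $\sum_i X_i(\gamma_{ij}^k)(x)$), leave only a cross term that can be rewritten using \eqref{Jmap} together with the identity $\gamma_{ij}^k=-g_\Ho(J_{Z_k}X_i, X_j)$. The resulting Bochner-type identity reads
$$\Gamma_2^\Ho(f) = \|\nabla_{\Ho,\Ho}^2 f\|^2 + \mathbf{Ricci}_\Ho(\nabla_\Ho f, \nabla_\Ho f) - 2\sum_{l=1}^m g_\Ho\bigl(J_{Z_l}\nabla_\Ho f,\,\nabla_\Ho(Z_l f)\bigr).$$

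Applying Cauchy-Schwarz with weight $\varepsilon$ to the cross term gives
$$2\Bigl|\sum_{l=1}^m g_\Ho(J_{Z_l}\nabla_\Ho f,\nabla_\Ho(Z_l f))\Bigr| \le \frac{1}{\varepsilon}\sum_{l=1}^m\|J_{Z_l}\nabla_\Ho f\|^2 + \varepsilon \sum_{l=1}^m\|\nabla_\Ho(Z_l f)\|^2 \le \frac{\kappa}{\varepsilon}\|\nabla_\Ho f\|^2 + \varepsilon\|\nabla_{\V,\Ho}^2 f\|^2,$$
where the last bound uses $\|\mathbf{J}X\|^2 = \sum_l\|J_{Z_l}X\|^2 \le \kappa\|X\|^2$ and (at the center of the frame) $\sum_l\|\nabla_\Ho(Z_l f)\|^2 = \|\nabla_{\V,\Ho}^2 f\|^2$. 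Dropping $\|\nabla_{\Ho,\Ho}^2 f\|^2\ge 0$ and using $\mathbf{Ricci}_\Ho\ge\rho_1$, we obtain
$$\Gamma_2^\Ho(f) \ge \Bigl(\rho_1 - \frac{\kappa}{\varepsilon}\Bigr)\|\nabla_\Ho f\|^2 - \varepsilon\|\nabla_{\V,\Ho}^2 f\|^2,$$
and the $+\varepsilon\|\nabla_{\V,\Ho}^2 f\|^2$ already present in the Proposition \ref{boch} decomposition precisely absorbs this error.

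The main technical obstacle is establishing the Bochner identity for $\Gamma_2^\Ho$ with the explicit $\mathbf{J}$ cross term: one must carefully verify that the Yang-Mills hypothesis eliminates all first-derivative contributions of the structure constants $\gamma_{ij}^k$ at $x$, correctly identify the surviving cross term through \eqref{Jmap}, and get the signs right. Once this identity is in hand, the Cauchy-Schwarz step is forced by the structure of the right-hand side (choosing the weight $\varepsilon$ so as to match exactly the $\|\nabla_{\V,\Ho}^2 f\|^2$ term already available) and the assembly is immediate.
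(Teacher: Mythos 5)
Your argument is correct and follows the paper's route exactly: decompose $\mathcal{T}_2$ via Proposition \ref{boch} and then bound $\Gamma_2^{\Ho}(f) + \varepsilon \| \nabla_{\V,\Ho}^2 f \|^2$ from below --- the paper simply cites \cite{BKW} for this last step, whereas you rederive it via the transverse Weitzenb\"ock identity and a weighted Cauchy-Schwarz with weight $\varepsilon$. One minor imprecision: your displayed Bochner formula for $\Gamma_2^{\Ho}$ should also contain the nonnegative term $-\frac{1}{4}\mathbf{Tr}_{\Ho}(J^2_{\nabla_{\V} f})$ (the term responsible for $\rho_2 \| \nabla_{\V} f\|^2$ in the refined inequality quoted in Section 4); since it is nonnegative, omitting it only turns your identity into an inequality in the right direction and does not affect the conclusion.
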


\begin{proof}
From Proposition \ref{boch}, we have
\[
\mathcal{T}_2(f) =\Gamma_2^\mathcal H (f) +\varepsilon \| \nabla_{\V,\Ho}^2 f \|^2.
\]
As a consequence of \cite{BKW} we have
\[
\Gamma_2^\mathcal H (f) +\varepsilon \| \nabla_{\V,\Ho}^2 f \|^2  \ge \left( \rho_1 -\frac{\kappa}{\varepsilon} \right)  \| \nabla_\mathcal{H} f \|^2.
\]
This concludes the proof.
\end{proof}

\subsection{Gradient bounds for the heat semigroup}

We now investigate the consequences of \eqref{CD8} in terms of functional inequalities for the diffusion semigroup generated by $\Delta_\varepsilon$. We denote by $P_t^\varepsilon$ the semigroup generated by $\Delta_\varepsilon$ and denote by $ \Gamma_{\varepsilon}$ the \textit{carr\'e du champ} of $\Delta_\varepsilon$. Observe that our assumptions do not imply any lower bound on the $\Gamma_2$ operator of $\Delta_\varepsilon$.

 The following assumptions will be in force throughout this section:

\begin{assumption}

\

\begin{enumerate}
\item For every $t \ge 0$, $P_t^\varepsilon 1 =1$;
\item For every $t \ge 0$ and $f \in C^\infty_0(\M)$, 
\[
\sup_{0 \le s \le t} \|  \Gamma_{\varepsilon} (P_s^\ve f ) \|_\infty < \infty.
\]
\end{enumerate}
\end{assumption}

\

The first gradient bound we have is the following.

 \begin{proposition}
 For every $f \in C_0^\infty (\M)$, $f \ge 0$, we have for $t \ge 0$,
\begin{equation}\label{e:}
 (P_t^{\varepsilon} f )  \,  (\| \nabla_\Ho  \ln P_{t}^{\varepsilon} f \|^2 ) \leq  e^{- 2\left( \rho_1 -\frac{\kappa}{\varepsilon} \right) t}   P_{t}^{\varepsilon} (f \| \nabla_\Ho \ln f \|^2 ) .
\end{equation}
\end{proposition}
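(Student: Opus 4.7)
The plan is to run a Bakry-\'Emery-type interpolation along the semigroup $P_s^{\varepsilon}$, coupled with the horizontal curvature-dimension estimate \eqref{CD8}. Fix $x \in \M$ and $t \ge 0$, and (by the standard regularization $f \mapsto f+\eta$, $\eta \downarrow 0$) assume $f>0$. Let $h_s := P_{t-s}^{\varepsilon} f$ and $g_s := \ln h_s$, and define
$$\phi(s) \;:=\; P_s^{\varepsilon}\bigl(h_s \, \|\nabla_{\Ho} g_s\|^2\bigr)(x), \qquad s \in [0,t].$$
Then $\phi(0) = (P_t^{\varepsilon} f)\,\|\nabla_{\Ho} \ln P_t^{\varepsilon} f\|^2(x)$ and $\phi(t) = P_t^{\varepsilon}(f\,\|\nabla_{\Ho} \ln f\|^2)(x)$, so the desired inequality \eqref{e:} reduces, by Gronwall's lemma, to establishing
$$\phi'(s) \;\ge\; 2\Bigl(\rho_1 - \frac{\kappa}{\varepsilon}\Bigr)\,\phi(s) \qquad \text{for all } s\in[0,t].$$

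To prove this differential inequality, set $F_s := h_s\,\|\nabla_\Ho g_s\|^2$, which also equals $\Gamma_\Ho(h_s)/h_s$ since $\nabla_\Ho h_s = h_s \nabla_\Ho g_s$. Using $\partial_s h_s = -\Delta_{\varepsilon} h_s$ and $\partial_s g_s = -\Delta_{\varepsilon} g_s - \Gamma_{\varepsilon}(g_s)$ together with the Leibniz and chain rules for $\Delta_{\varepsilon}$ (whose carr\'e du champ is $\Gamma_{\varepsilon}=\Gamma_\Ho+\varepsilon\Gamma_\V$), a direct bookkeeping gives
$$\Delta_{\varepsilon} F_s + \partial_s F_s \;=\; 2h_s\,\mathcal{T}_2(g_s) \;+\; 2\varepsilon\, h_s\,\bigl[\Gamma_\V(g_s, \Gamma_\Ho(g_s)) - \Gamma_\Ho(g_s, \Gamma_\V(g_s))\bigr].$$

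The main step, and the only place where the geometric hypotheses enter nontrivially, is to show that the bracketed commutator-type term vanishes identically. In an adapted local orthonormal frame $X_1,\dots,X_n,Z_1,\dots,Z_m$ with $X_i$ basic (available because the claim is local, as in the proof of Proposition \ref{boch}), the relation $[X_i,Z_k]=\sum_j \beta_{ik}^j Z_j$ yields
$$\Gamma_\V(g, \Gamma_\Ho(g)) - \Gamma_\Ho(g, \Gamma_\V(g)) \;=\; 2\sum_{i,j}(X_i g)(Z_j g)\,[Z_j,X_i]g \;=\; -2\sum_i (X_i g)\sum_{j,l}\beta_{ij}^l (Z_j g)(Z_l g).$$
The totally geodesic leaves hypothesis produces the skew-symmetry $\beta_{ij}^l = -\beta_{il}^j$, and combined with the symmetry of $(Z_j g)(Z_l g)$ in $(j,l)$ this forces the inner double sum to vanish for every $i$.

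Consequently $\phi'(s) = P_s^{\varepsilon}(\Delta_{\varepsilon} F_s + \partial_s F_s)(x) = 2 P_s^{\varepsilon}(h_s\,\mathcal{T}_2(g_s))(x)$, and applying \eqref{CD8} pointwise gives
$$\phi'(s) \;\ge\; 2\Bigl(\rho_1-\frac{\kappa}{\varepsilon}\Bigr)\, P_s^{\varepsilon}\bigl(h_s\,\Gamma_\Ho(g_s)\bigr)(x) \;=\; 2\Bigl(\rho_1-\frac{\kappa}{\varepsilon}\Bigr)\,\phi(s),$$
which yields the asserted differential inequality and hence \eqref{e:}. The differentiations under $P_s^{\varepsilon}$ are justified by the standing Assumption (conservativity of $P_s^{\varepsilon}$ and the local boundedness of $\Gamma_\varepsilon(P_s^\varepsilon f)$).
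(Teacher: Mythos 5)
Your proof is correct and follows essentially the same route as the paper: a Bakry--\'Emery interpolation of $P_s^{\varepsilon}\bigl((P_{t-s}^{\varepsilon}f)\,\|\nabla_\Ho \ln P_{t-s}^{\varepsilon}f\|^2\bigr)$, reduced via the commutation identity $\Gamma_\Ho(g,\Gamma_\varepsilon(g))=\Gamma_\varepsilon(g,\Gamma_\Ho(g))$ to the curvature bound \eqref{CD8}, and then Gronwall (the paper instead absorbs the exponential into $\phi$ and invokes a parabolic comparison theorem, which is the same thing). The only substantive addition on your side is that you actually verify the commutation identity in a basic frame using the skew-symmetry $\beta_{ij}^l=-\beta_{il}^j$ coming from the totally geodesic leaves, a step the paper states without proof.
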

\begin{proof}
The argument is close to the one we use in \cite{BB}, so we only sketch the proof.
We fix $t>0$ and denote
\[
 \phi(s):=  e^{-2 \left( \rho_1 -\frac{\kappa}{\varepsilon} \right)s}( P_{t-s}^{\varepsilon} f )  \,  \| \nabla_\Ho  \ln P_{t-s}^{\varepsilon} f \|^2.
 \]
Since for every smooth $g$,  $$\langle \nabla_\mathcal{H} g , \nabla_\mathcal{H} \Gamma_\varepsilon (g) \rangle= \Gamma_\varepsilon ( g, \| \nabla_\mathcal{H} g \|^2  ),$$
one has
\begin{align*}
  \frac{d}{ds}\phi(s)+ \Delta_\ve \phi (s) = 2 ( P_{t-s}^{\varepsilon} f ) \mathcal{T}_2( P_{t-s}^{\varepsilon} f ) - 2 \left( \rho_1 -\frac{\kappa}{\varepsilon} \right) \phi (s) \ge 0.
\end{align*}
It is now easy to conlude from a parabolic comparison theorem.
\end{proof}

Of course, this also implies that

\begin{proposition}\label{p:reg}
 For every $f \in C_0^\infty (\M)$,  we have for $t \ge 0$
\begin{equation}\label{e:2}
  \| \nabla_\Ho  P_{t}^{\varepsilon} f \|^2  \leq  e^{- 2 \left( \rho_1 -\frac{\kappa}{\varepsilon} \right) t}   P_{t}^{\varepsilon} (\| \nabla_\Ho f \|^2 ) .
\end{equation}
and
\begin{equation}\label{e:reg_0}
  \| \nabla_\Ho  P_{t}^{\varepsilon} f \|_\infty \leq  e^{-  \left( \rho_1 -\frac{\kappa}{\varepsilon} \right) t}  \| \nabla_\Ho f \|_\infty.
\end{equation}
\end{proposition}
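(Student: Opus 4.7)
The plan is to obtain both bounds as essentially immediate consequences of the curvature-dimension inequality \eqref{CD8}, by running the same parabolic comparison argument as in the preceding proposition but on a different functional. For \eqref{e:2}, I would introduce, for fixed $t>0$ and $f \in C_0^\infty(\M)$, the auxiliary function
\[
\psi(x,s) := e^{-2(\rho_1 - \kappa/\varepsilon)s}\,\|\nabla_\Ho P_{t-s}^\varepsilon f\|^2(x), \qquad s \in [0,t].
\]
Using $\partial_s P_{t-s}^\varepsilon f = -\Delta_\varepsilon P_{t-s}^\varepsilon f$, a direct computation yields
\[
(\partial_s + \Delta_\varepsilon)\,\psi(\cdot,s) \;=\; 2\,e^{-2(\rho_1 - \kappa/\varepsilon)s}\Bigl(\mathcal{T}_2(P_{t-s}^\varepsilon f) - (\rho_1 - \kappa/\varepsilon)\,\|\nabla_\Ho P_{t-s}^\varepsilon f\|^2\Bigr) \;\geq\; 0,
\]
where the inequality is \eqref{CD8}. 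The parabolic comparison theorem then implies that $s \mapsto P_s^\varepsilon \psi(\cdot, s)$ is nondecreasing on $[0,t]$; evaluating at $s = 0$ and $s = t$ yields exactly \eqref{e:2}.

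An equivalent route, which makes transparent why the paper phrases \eqref{e:2} as a corollary of \eqref{e:}, is to apply \eqref{e:} with $f$ replaced by $f + c$ for a positive constant $c$. Since $\|\nabla_\Ho\ln(f+c)\|^2 = \|\nabla_\Ho f\|^2/(f+c)^2$, the inequality rewrites as
\[
\frac{\|\nabla_\Ho P_t^\varepsilon f\|^2}{P_t^\varepsilon f + c} \;\leq\; e^{-2(\rho_1 - \kappa/\varepsilon)t}\, P_t^\varepsilon\!\left(\frac{\|\nabla_\Ho f\|^2}{f+c}\right).
\]
Multiplying through by $P_t^\varepsilon f + c$ and letting $c \to \infty$ (using dominated convergence, valid because $f \in C_0^\infty(\M)$ is bounded with bounded horizontal gradient) then recovers \eqref{e:2}.

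Once \eqref{e:2} is available, \eqref{e:reg_0} is immediate: apply \eqref{e:2} pointwise at each $x \in \M$ and use that the stochastic completeness assumption $P_t^\varepsilon 1 = 1$ makes $P_t^\varepsilon$ a contraction on $L^\infty$, so that $P_t^\varepsilon(\|\nabla_\Ho f\|^2)(x) \leq \|\nabla_\Ho f\|_\infty^2$. Taking the square root and then the supremum over $x$ produces \eqref{e:reg_0}.

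The main point requiring care is the rigorous justification of the parabolic comparison theorem on the complete noncompact manifold $\M$ (or, in the alternative route, of the dominated convergence step). Both rest on the uniform control of $\sup_{0\le s\le t}\|\Gamma_\varepsilon(P_s^\varepsilon f)\|_\infty$ built into the standing assumption of the section, exactly as in the sketched proof of the preceding log-gradient bound. Beyond this standard technical point I do not anticipate any genuine obstacle, consistent with the paper's phrasing ``Of course, this also implies that''.
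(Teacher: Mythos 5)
Your proof is correct and matches the paper's (implicit) argument: the paper gives no proof beyond ``Of course, this also implies that,'' and your second route --- applying \eqref{e:} to $f+c$ and letting $c\to\infty$ --- is exactly the intended implication, while your first route is the standard direct semigroup interpolation using \eqref{CD8}, identical in structure to the proof of the preceding proposition. The only point worth a half-sentence of care is that $f+c$ is no longer compactly supported, but \eqref{e:} extends to it harmlessly since adding a constant leaves all gradients unchanged and $P_t^{\varepsilon}1=1$ by the standing assumption.
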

We also have the following reverse log-Sobolev inequality: %\red{(I change the 2)}
\begin{proposition}\label{reverse_logsob}
 For every $f \in C_0^\infty (\M)$, $f \ge 0$, we have for $t \ge 0$
\begin{equation}\label{e:RevLS}
 P_t^{\varepsilon} f  \| \nabla_\Ho   \ln  P_{t}^{\varepsilon} f  \|^2 
 \le \frac{  2 \left( \rho_1 -\frac{\kappa}{\varepsilon} \right)} {e^{ 2 \left( \rho_1 -\frac{\kappa}{\varepsilon} \right) t}-1} 
\, \left( P_t^{\varepsilon} (f\ln f) -  P_t^{\varepsilon}(f) \ln  P_t^{\varepsilon}(f) \right).
\end{equation}
\end{proposition}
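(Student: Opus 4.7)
The strategy is the classical Bakry--Ledoux semigroup interpolation applied to the entropy functional, using the horizontal gradient bound (\ref{e:}) of the preceding proposition to produce a lower bound on the derivative. Set $\rho := \rho_1 - \kappa/\varepsilon$ for brevity. Fix $t>0$ and $f \in C_0^\infty(\M)$ with $f \ge 0$; by a routine approximation one may assume $f \ge \delta >0$ and let $\delta \to 0$ at the end. Writing $g_s := P_{t-s}^{\varepsilon} f$, define
\[
\Phi(s) := P_s^{\varepsilon}\!\left(g_s \ln g_s\right), \qquad s \in [0,t].
\]
Then $\Phi(0) = (P_t^{\varepsilon} f) \ln (P_t^{\varepsilon} f)$ and $\Phi(t) = P_t^{\varepsilon}(f \ln f)$, so the right-hand side of (\ref{e:RevLS}) equals $\frac{2\rho}{e^{2\rho t}-1}(\Phi(t)-\Phi(0))$. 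The goal is therefore to show that $\Phi(t)-\Phi(0) \ge \frac{e^{2\rho t}-1}{2\rho}(P_t^{\varepsilon} f)\,\|\nabla_\Ho \ln P_t^{\varepsilon} f\|^2$.

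The first step is to differentiate $\Phi$. Since $\Delta_\varepsilon$ is a diffusion operator, the chain rule $\Delta_\varepsilon \phi(g)=\phi'(g)\Delta_\varepsilon g+\phi''(g)\,\Gamma_\varepsilon(g)$ applied to $\phi(u)=u\ln u$, together with $\partial_s g_s = -\Delta_\varepsilon g_s$, yields
\[
\Phi'(s) = P_s^{\varepsilon}\!\left(\frac{\Gamma_\varepsilon(g_s)}{g_s}\right).
\]
Because $\Gamma_\varepsilon(h) = \|\nabla_\Ho h\|^2 + \varepsilon\,\|\nabla_\V h\|^2 \ge \|\nabla_\Ho h\|^2$, discarding the vertical contribution gives the pointwise lower bound
\[
\Phi'(s) \,\ge\, P_s^{\varepsilon}\!\left(g_s\, \|\nabla_\Ho \ln g_s\|^2\right).
\]

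The second step is to insert the horizontal gradient estimate (\ref{e:}), applied to the nonnegative function $g_s$ over the time interval of length $s$. Since $P_s^\varepsilon g_s = P_t^\varepsilon f$, it reads
\[
(P_t^{\varepsilon} f)\,\|\nabla_\Ho \ln P_t^{\varepsilon} f\|^2 \,\le\, e^{-2\rho s}\, P_s^{\varepsilon}\!\left(g_s\,\|\nabla_\Ho \ln g_s\|^2\right),
\]
which combined with the previous inequality gives $\Phi'(s) \ge e^{2\rho s}\,(P_t^{\varepsilon} f)\,\|\nabla_\Ho \ln P_t^{\varepsilon} f\|^2$. Integrating over $s \in [0,t]$ and using $\int_0^t e^{2\rho s}\,ds = (e^{2\rho t}-1)/(2\rho)$ produces exactly (\ref{e:RevLS}) after rearrangement.

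The main obstacle is purely technical: justifying the differentiation under the semigroup, commuting $\partial_s$ with $P_s^\varepsilon$, and applying the pointwise chain rule for $\Delta_\varepsilon$ on $g_s \ln g_s$. These are granted by the running Assumption (stochastic completeness of $P^\varepsilon$ and the uniform-in-$s$ bound on $\|\Gamma_\varepsilon(P_s^\varepsilon f)\|_\infty$) together with the Riemannian smoothing of $P_s^\varepsilon$ on $(\M,g_\varepsilon)$, exactly as in the proof of (\ref{e:}); the approximation $f \leadsto f+\delta$ handles the singularity of the logarithm at $0$.
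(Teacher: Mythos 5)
Your proof is correct and follows exactly the same route as the paper's: the entropy difference is written as $\int_0^t \Phi'(s)\,ds$ with $\Phi(s)=P_s^{\varepsilon}\big((P_{t-s}^{\varepsilon}f)\ln(P_{t-s}^{\varepsilon}f)\big)$, the derivative $\Phi'(s)=P_s^{\varepsilon}\big(g_s\,\Gamma_{\varepsilon}(\ln g_s)\big)$ is bounded below by discarding the vertical part of $\Gamma_\varepsilon$, and the gradient commutation estimate \eqref{e:} applied to $g_s=P_{t-s}^{\varepsilon}f$ over time $s$ yields the factor $e^{2\rho s}$ before integrating. The only difference is that you spell out the regularization $f\leadsto f+\delta$ and the justification of differentiating under $P_s^\varepsilon$, which the paper leaves implicit.
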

\begin{proof}
One has:
\begin{align*}
 P_t^{\varepsilon} (f\ln f) -  P_t^{\varepsilon}(f) \ln  P_t^{\varepsilon}(f)
   &= \int_0^t   \frac{d}{ds} P_s^{\varepsilon} \left( ( P_{t-s}^{\varepsilon} f ) \ln ( P_{t-s}^{\varepsilon} f )\right) ds\\
                                    &=  \int_0^t  P_s^{\varepsilon} \left(  ( P_{t-s}^{\varepsilon} f ) \Gamma_{\varepsilon}( \ln  P_{t-s}^{\varepsilon} f ) \right) ds\\
                                    &\ge \int_0^t  P_s^{\varepsilon} \left(  ( P_{t-s}^{\varepsilon} f ) \| \nabla_\Ho   \ln  P_{t-s}^{\varepsilon} f  \|^2 \right) ds\\
                                    &\ge \int_0^t e^{  2\left( \rho_1 -\frac{\kappa}{\varepsilon} \right) s}   ( P_t^{\varepsilon}  f)  \| \nabla_\Ho   \ln  P_{t}^{\varepsilon} f  \|^2  ds\\
                                    &=  \left( \frac{ e^{2 \left( \rho_1 -\frac{\kappa}{\varepsilon} \right) t} -1 } { 2\left( \rho_1 -\frac{\kappa}{\varepsilon} \right) } \right)
                                    (P_t^{\varepsilon}  f)  \| \nabla_\Ho   \ln  P_{t}^{\varepsilon} f  \|^2.
\end{align*}

\end{proof}

Similarly we get the following reverse Poincar\'e inequality:  % \red{(I change the 2)}

\begin{proposition}
For every $f \in C_0^\infty (\M)$,  we have for $t \ge 0$,
\begin{equation}\label{e:RevP}
   \| \nabla_\Ho   P_{t}^{\varepsilon} f  \|^2 
 \le \frac{  \left(  \rho_1 -\frac{\kappa}{\varepsilon} \right)} {e^{ 2 \left( \rho_1 -\frac{\kappa}{\varepsilon} \right) t}-1 } 
\, \left( P_t^{\varepsilon} (f^2) -  P_t^{\varepsilon}(f)^2 \right).
\end{equation}

and
\begin{equation}\label{e:reg}
  \| \nabla_\Ho  P_{t}^{\varepsilon} f \|_\infty \leq \sqrt{ \frac{  \left(  \rho_1 -\frac{\kappa}{\varepsilon} \right)} { e^{ 2 \left( \rho_1 -\frac{\kappa}{\varepsilon} \right) t} -1 } }  \,  \|  f \|_\infty.
\end{equation}
\end{proposition}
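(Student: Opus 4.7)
The plan is to imitate exactly the proof scheme of Proposition \ref{reverse_logsob}, but using the variance $P_t^{\varepsilon}(f^2) - (P_t^{\varepsilon}f)^2$ in place of the entropy, and using the gradient bound \eqref{e:2} of Proposition \ref{p:reg} in place of the pointwise bound on $f\,\|\nabla_\Ho \ln f\|^2$.

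First I would write the variance as a telescoping integral along the semigroup:
\[
P_t^{\varepsilon}(f^2) - (P_t^{\varepsilon}f)^2 = \int_0^t \frac{d}{ds} P_s^{\varepsilon}\bigl((P_{t-s}^{\varepsilon}f)^2\bigr)\, ds = 2\int_0^t P_s^{\varepsilon}\bigl(\Gamma_{\varepsilon}(P_{t-s}^{\varepsilon}f)\bigr)\, ds.
\]
Since $\Gamma_{\varepsilon}(g) \ge \|\nabla_\Ho g\|^2$ (the full carré du champ dominates its horizontal part), this integrand is bounded below by $P_s^{\varepsilon}(\|\nabla_\Ho P_{t-s}^{\varepsilon}f\|^2)$.

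Next I would apply Proposition \ref{p:reg}, written in the semigroup form $P_t^{\varepsilon}f = P_s^{\varepsilon}(P_{t-s}^{\varepsilon}f)$, which yields
\[
\|\nabla_\Ho P_t^{\varepsilon}f\|^2 \le e^{-2(\rho_1 - \kappa/\varepsilon)s}\, P_s^{\varepsilon}\bigl(\|\nabla_\Ho P_{t-s}^{\varepsilon}f\|^2\bigr),
\]
equivalently $P_s^{\varepsilon}(\|\nabla_\Ho P_{t-s}^{\varepsilon}f\|^2) \ge e^{2(\rho_1 - \kappa/\varepsilon)s}\|\nabla_\Ho P_t^{\varepsilon}f\|^2$. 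Inserting this bound in the integral and integrating the exponential $2\int_0^t e^{2(\rho_1-\kappa/\varepsilon)s}\,ds = (e^{2(\rho_1-\kappa/\varepsilon)t}-1)/(\rho_1-\kappa/\varepsilon)$ gives
\[
P_t^{\varepsilon}(f^2) - (P_t^{\varepsilon}f)^2 \ge \frac{e^{2(\rho_1-\kappa/\varepsilon)t}-1}{\rho_1-\kappa/\varepsilon}\, \|\nabla_\Ho P_t^{\varepsilon}f\|^2,
\]
which rearranges to \eqref{e:RevP}.

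For \eqref{e:reg}, I would simply bound $P_t^{\varepsilon}(f^2) - (P_t^{\varepsilon}f)^2 \le P_t^{\varepsilon}(f^2) \le \|f\|_\infty^2$ and take square roots. No step here looks like a real obstacle: the scheme is the classical $\Gamma_2$--interpolation argument, and the only thing to watch is the sign of $\rho_1 - \kappa/\varepsilon$ so that $(e^{2c t}-1)/c$ stays positive (it is an increasing function of $c$ and equal to $2t$ at $c=0$, so the formula is valid for all signs by continuous extension); the main subtlety, as in Proposition \ref{reverse_logsob}, is the standing Assumption on stochastic completeness and finite $\Gamma_\varepsilon$ along the semigroup, which is needed to justify differentiating under $P_s^{\varepsilon}$.
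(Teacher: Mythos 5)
Your argument is correct and is exactly the proof the paper intends: the reverse Poincar\'e inequality is stated with only the remark ``similarly,'' meaning the same interpolation scheme as Proposition \ref{reverse_logsob} with the variance $P_t^{\varepsilon}(f^2)-(P_t^{\varepsilon}f)^2$ in place of the entropy and the gradient bound \eqref{e:2} in place of the pointwise bound \eqref{e:}. Your telescoping identity, the lower bound $\Gamma_\varepsilon \ge \|\nabla_\Ho \cdot\|^2$, the integration of the exponential, and the $L^\infty$ bound for \eqref{e:reg} all match what the authors leave implicit.
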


We then prove the following Wang-Harnack inequality:

\begin{proposition}\label{Wang inequality}
Let $\alpha>1$. For $f \in L^\infty(\bM)$,   $f \ge 0$, $t>0$, $x,y \in \bM$,
\[
(P_t^\ve f)^\alpha (x) \leq P_t^\ve(f^\alpha) (y) \exp \left(  \frac{\alpha}{4(\alpha-1)}
\left( \frac  { 2\left( \rho_1 -\frac{\kappa}{\varepsilon} \right) } {e^{ 2\left( \rho_1 -\frac{\kappa}{\varepsilon} \right) t} -1} \right)  d_\Ho^2(x,y)\right).
\]
where $d_\Ho$ is the sub-elliptic distance $d_\Ho$ associated to $\nabla_\Ho$, that is
\[
d_\Ho (x,y) =\sup \{ | f(x)-f(y) |, f \in C_0^\infty(\M), \|  \nabla_\Ho f \|_\infty \le 1 \}.
\]
\end{proposition}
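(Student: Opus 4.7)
The approach is Wang's semigroup-interpolation technique, adapted to the sub-elliptic setting. The key input is the pointwise logarithmic gradient bound \eqref{e:}, which compensates for the fact that no full $\Gamma_2$ lower bound on $\Delta_\ve$ is available. Fix $x,y\in\M$ and $\delta>0$. Using the bracket-generating hypothesis and the definition of $d_\Ho$, I would choose an absolutely continuous horizontal curve $\gamma:[0,t]\to\M$ with $\gamma(0)=x$, $\gamma(t)=y$, and horizontal length at most $d_\Ho(x,y)+\delta$ (the parametrization will be optimized at the end), together with a smooth exponent function $p:[0,t]\to(1,\alpha]$ with $p(t)=\alpha$. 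The interpolating functional is
\[
\phi(s)=\frac{1}{p(s)}\log P_s^\ve\bigl((P_{t-s}^\ve f)^{p(s)}\bigr)(\gamma(s)),
\]
chosen so that $\phi(0)=\log(P_t^\ve f)(x)$ (the outer $1/p(0)$ cancels the inner exponent at $s=0$) and $\phi(t)=\tfrac{1}{\alpha}\log P_t^\ve(f^\alpha)(y)$.

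Differentiating $\phi$, using $\partial_s P_{t-s}^\ve f=-\Delta_\ve P_{t-s}^\ve f$, the commutation $\Delta_\ve P_s^\ve=P_s^\ve\Delta_\ve$, and the chain rule $\Delta_\ve u^p=pu^{p-1}\Delta_\ve u+p(p-1)u^{p-2}\Gamma_\ve(u)$, then dropping a nonnegative entropy-like contribution and using $\Gamma_\ve\ge\|\nabla_\Ho\cdot\|^2$ together with the horizontality of $\gamma$, one obtains
\[
\phi'(s)\ge\frac{p(s)-1}{p(s)^2}\,\frac{I_1(s)}{F(s)}-\frac{\|\nabla_\Ho F(s)\|\,\|\dot\gamma(s)\|_\Ho}{p(s)\,F(s)},
\]
where $F(s)=P_s^\ve((P_{t-s}^\ve f)^{p(s)})(\gamma(s))$ and $I_1(s)=P_s^\ve(\|\nabla_\Ho(P_{t-s}^\ve f)^{p(s)}\|^2/(P_{t-s}^\ve f)^{p(s)})(\gamma(s))$. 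Applying \eqref{e:} pointwise at $\gamma(s)$ to $P_s^\ve$ and the function $(P_{t-s}^\ve f)^{p(s)}$ gives $\|\nabla_\Ho F\|^2/F\le e^{-2(\rho_1-\kappa/\ve)s}\,I_1$, and completing the square in the variable $\sqrt{I_1/F}$ then yields
\[
\phi'(s)\ge-\frac{e^{-2(\rho_1-\kappa/\ve)s}\,\|\dot\gamma(s)\|_\Ho^2}{4\,(p(s)-1)}.
\]

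Integrating from $0$ to $t$, multiplying by $\alpha$ and exponentiating produces
\[
(P_t^\ve f)^\alpha(x)\le P_t^\ve(f^\alpha)(y)\,\exp\Bigl(\frac{\alpha}{4}\int_0^t\frac{e^{-2(\rho_1-\kappa/\ve)s}\|\dot\gamma(s)\|_\Ho^2}{p(s)-1}\,ds\Bigr).
\]
To recover the announced constant, take $p\equiv\alpha$ and reparametrize $\gamma$ so that $\|\dot\gamma(s)\|_\Ho\propto e^{2(\rho_1-\kappa/\ve)s}$. With this choice, the Cauchy--Schwarz inequality $(\int_0^t\|\dot\gamma\|_\Ho\,ds)^2\le(\int_0^t\|\dot\gamma\|_\Ho^2 e^{-2(\rho_1-\kappa/\ve)s}\,ds)\cdot(\int_0^t e^{2(\rho_1-\kappa/\ve)s}\,ds)$ becomes an equality; combining with $\int_0^t\|\dot\gamma\|_\Ho\,ds\le d_\Ho(x,y)+\delta$ reduces the exponent to $\tfrac{\alpha}{4(\alpha-1)}\cdot\tfrac{2(\rho_1-\kappa/\ve)(d_\Ho(x,y)+\delta)^2}{e^{2(\rho_1-\kappa/\ve)t}-1}$, and sending $\delta\to 0$ concludes.

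The principal delicate point is the interplay between $d_\Ho$ (defined via test functions) and lengths of smooth horizontal curves: one must invoke Chow's theorem under the bracket-generating assumption to produce horizontal curves of prescribed horizontal length approximating $d_\Ho(x,y)$, with the freedom to reparametrize so that $\|\dot\gamma\|_\Ho\propto e^{2(\rho_1-\kappa/\ve)s}$. A secondary issue is regularity of $\phi$ to justify the differentiation step, which is handled by the standing assumptions on $P_s^\ve$ together with smooth compactly supported approximations of $f$.
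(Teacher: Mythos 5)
Your argument is correct and yields exactly the stated constant, but it follows a genuinely different variant of Wang's interpolation than the paper does. The paper keeps the semigroup time $t$ fixed and varies the exponent, working with $\psi(s)=\frac{\alpha}{\beta(s)}\ln P_t^\ve f^{\beta(s)}(\gamma(s))$, $\beta(s)=1+(\alpha-1)\frac{s}{T}$, along a subunit curve; the key analytic input there is the reverse log-Sobolev inequality of Proposition \ref{reverse_logsob}, the cross term is absorbed by Young's inequality with an optimal $\lambda$, and one concludes by minimizing $T^2$ over subunit curves. You instead split the semigroup time, keep the exponent constant at $\alpha$, and feed in the commutation-type gradient bound \eqref{e:} directly, compensating with the Cauchy--Schwarz optimal reparametrization $\|\dot\gamma\|_\Ho\propto e^{2(\rho_1-\kappa/\ve)s}$. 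Since Proposition \ref{reverse_logsob} is itself derived from \eqref{e:}, your route uses the logically more primitive estimate at the cost of the extra reparametrization step, while the paper's route is shorter once the reverse log-Sobolev inequality is in hand; both give the same exponent. The two points you flag — the identification of $d_\Ho$ with the infimum of horizontal lengths under the bracket-generating hypothesis, and the approximation needed to apply \eqref{e:} to the non-compactly-supported functions $(P_{t-s}^\ve f)^{\alpha}$ — are equally present (and equally glossed over) in the paper's own proof, which applies Proposition \ref{reverse_logsob} to $f^{\beta(s)}$ for $f\in L^\infty$ and minimizes over subunit curves without further comment.
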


\begin{proof}In this proof to simplify the notation, we write $P_t$ for $P_t^\ve$ and set  $\ro:=\left( \rho_1-\frac{\kappa}{\varepsilon}\right)$.
 %We first assume $f\in C_0^\infty(\M) $.

 Consider a  curve $\gamma:[0,T] \rightarrow \bM$ such that $\gamma(0)=x$, $\gamma(T)=y$ and which is subunit with respect to the sub-elliptic distance associated to $\nabla_\Ho$.  
 Let $\alpha >1$ and $\beta(s)=1+(\alpha-1)\frac{s}{T}$, $0 \le s \le T$.  Let
\[
\psi (s)=\frac{\alpha}{\beta (s)}  \ln P_t f^{\beta (s)} (\gamma(s)), \quad 0 \le s \le T.
\]
where $t>0$ is fixed. Differentiating with respect to $s$ and using  then Proposition  \ref{reverse_logsob} yields
\begin{align*}
\psi' (s) & \ge \frac{\alpha (\alpha-1)}{T \beta(s)^2} \frac{P_t (f^{\beta(s)} \ln f^{\beta(s)}) -(P_t f^{\beta(s)}) \ln P_t f^{\beta(s)}  }{P_t f^{\beta(s)} }
-\frac{\alpha}{\beta (s)}  \|  \nabla_\Ho ( \ln P_t f^{\beta(s)}) \|  \\
 & \ge \frac{\alpha (\alpha-1)  }{T\beta(s)^2}   \left( \frac   { e^{ 2 \ro t}-1 } {2 \ro} \right) 
 \|  \nabla_\Ho ( \ln P_t f^{\beta(s)}) \|^2 -\frac{\alpha}{\beta (s)}   \|  \nabla_\Ho ( \ln P_t f^{\beta(s)}) \|.
\end{align*}
Now, for every $\lambda >0$,
\[
  -  \|  \nabla_\Ho ( \ln P_t f^{\beta(s)}) \| \ge -\frac{1}{2\lambda^2}  \|  \nabla_\Ho ( \ln P_t f^{\beta(s)}) \|^2-\frac{\lambda^2}{2} .
\]
If we choose
\[
\lambda^2=\frac{ T \beta (s) }{2(\alpha -1) }  \left( \frac  { 2\ro } { e^{ 2\ro t} -1} \right), 
\]
we infer that
\[
\psi' (s) \ge - \frac{\alpha  T  }{4(\alpha -1) }  \left( \frac  { 2\ro } { e^{ 2\ro t} -1} \right).
\]

Integrating from $0$ to $L$ yields
\[
\ln P_t (f^\alpha) (y) -\ln (P_t f)^\alpha (x) \ge - \frac{\alpha   }{4(\alpha -1) }\left( \frac  { 2\ro } { e^{ 2\ro t} -1} \right) \, T^2 .
\]
Minimizing then  $T^2$ over the set of subunit curves such that $\gamma(0)=x$ and $\gamma(T)=y$ gives the claimed result.

% \red{To be rigourous, maybe we need to pass by the space $\mathcal{A}_\varepsilon$.}
% If $f \in L^\infty(\bM)$, $f \ge 0$,  then for $\varepsilon >0$, $n \ge 0$, and $\tau>0$, the function $\varepsilon +h_n P_\tau f  \in \mathcal{A}_\varepsilon$, where $h_n \in C^\infty_0(\bM)$ is an increasing, non negative,  sequence that converges to $1$. Letting then $\varepsilon \to 0$ , $ n \to \infty$ and $\tau \to 0$ proves that the inequality still holds for $f \in L^\infty(\bM)$.
\end{proof}

An easy consequence of the Wang inequality of Proposition \ref{Wang inequality} is the following log-Harnack inequality.  
\begin{proposition}\label{log_harnack}
For $f \in L^\infty(\bM)$,   $\inf f  > 0$, $t>0$, $x,y \in \bM$,
$$
P^\varepsilon_t(\ln f)(x) \leq \ln P^\varepsilon_t(f)(y) + \frac{1}{4} \left( \frac  { 2\left( \rho_1 -\frac{\kappa}{\varepsilon} \right) } {e^{ 2 \left( \rho_1 -\frac{\kappa}{\varepsilon} \right) t} -1} \right) d_\Ho^2(x,y).
$$
\end{proposition}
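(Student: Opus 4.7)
The plan is to deduce the log-Harnack inequality from the Wang-Harnack inequality of Proposition \ref{Wang inequality} by a standard logarithmic limit trick. Set $\ro:=\rho_1 - \frac{\kappa}{\varepsilon}$ and $C(t):=\frac{2\ro}{e^{2\ro t}-1}$. Since $\inf f>0$ and $f$ is bounded, for any $\alpha>1$ the function $f^{1/\alpha}$ is also in $L^\infty(\bM)$, bounded below by a positive constant. Applying Proposition \ref{Wang inequality} to $f^{1/\alpha}$ in place of $f$ gives
\[
(P_t^\ve f^{1/\alpha})^\alpha(x) \leq P_t^\ve(f)(y)\,\exp\!\left(\frac{\alpha}{4(\alpha-1)} C(t)\,d_\Ho^2(x,y)\right).
\]
Taking logarithms yields
\[
\alpha\,\ln P_t^\ve(f^{1/\alpha})(x) \leq \ln P_t^\ve(f)(y)+\frac{\alpha}{4(\alpha-1)}C(t)\,d_\Ho^2(x,y).
\]

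The next step is to let $\alpha\to\infty$. Writing $f^{1/\alpha}=\exp\!\left(\tfrac{1}{\alpha}\ln f\right)$ and expanding in $1/\alpha$, one gets
\[
f^{1/\alpha}=1+\frac{\ln f}{\alpha}+O(1/\alpha^2),
\]
uniformly in the underlying point (since $\ln f$ is bounded by the hypotheses $\inf f>0$ and $f\in L^\infty$). Consequently
\[
P_t^\ve(f^{1/\alpha})(x)=1+\frac{P_t^\ve(\ln f)(x)}{\alpha}+O(1/\alpha^2),
\]
so that $\alpha\,\ln P_t^\ve(f^{1/\alpha})(x)\to P_t^\ve(\ln f)(x)$ as $\alpha\to\infty$. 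Since also $\frac{\alpha}{\alpha-1}\to 1$, the limit of the previous inequality is exactly
\[
P_t^\ve(\ln f)(x)\leq \ln P_t^\ve(f)(y)+\frac{1}{4}C(t)\,d_\Ho^2(x,y),
\]
which is the claimed log-Harnack inequality.

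I do not expect any serious obstacle: the only delicate point is the interchange of the limit $\alpha\to\infty$ with the semigroup $P_t^\ve$, but this is immediate from the uniform expansion together with the fact that $P_t^\ve$ is a contraction on $L^\infty$ (which follows from the Markov property $P_t^\ve 1=1$ postulated in our running Assumption). If one prefers to avoid the limit argument, the same bound can be obtained directly by optimizing in $\alpha$ inside the Wang-Harnack inequality after the substitution $f\leadsto f^{1/\alpha}$; the optimal rate $\alpha\to\infty$ recovers the factor $1/4$ in front of $C(t)\,d_\Ho^2(x,y)$.
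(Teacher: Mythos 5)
Your argument is correct and is precisely the standard derivation the paper has in mind: the text gives no proof of Proposition \ref{log_harnack} but refers to Section 2 of \cite{W3}, where the log-Harnack inequality is obtained from the Wang-Harnack inequality by exactly this substitution $f\leadsto f^{1/\alpha}$ and the limit $\alpha\to\infty$, using $P_t^\varepsilon 1=1$ and the boundedness of $\ln f$ to justify the uniform expansion. No gaps.
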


The proof of this result appears in Section 2 of \cite{W3} where  a general study of these Harnack inequalities is done. 

When $\mu_\ve$ is a probability measure, the above log-Harnack inequalities implies the following lower bound for the heat kernel $p_t^\varepsilon$ of $P_t^\varepsilon$.
\begin{corollary}
Assume that $\mu_\ve$ is a probability measure, then for $t>0$, $x,y \in \bM$,
$$
p_{t}^\ve(x,y) \geq \exp\left(-  \frac{1}{4} \left( \frac  {2 \left( \rho_1 -\frac{\kappa}{\varepsilon} \right) } {e^{ \left( \rho_1 -\frac{\kappa}{\varepsilon} \right) t} -1} \right) d_\Ho^2(x,y))\right).
$$
\end{corollary}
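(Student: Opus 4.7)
My plan is to apply the log-Harnack inequality of Proposition~\ref{log_harnack} at time $t/2$ with the test function $f(\cdot) = p_{t/2}^\varepsilon(\cdot, x)$, and then combine it with the semigroup property, the symmetry of the heat kernel, and the nonnegativity of the entropy of $p_{t/2}^\varepsilon(x, \cdot)$ with respect to the probability measure $\mu_\varepsilon$.

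First I observe that since $\Delta_\varepsilon$ is self-adjoint in $L^2(\mu_\varepsilon)$, the heat kernel is symmetric, $p_s^\varepsilon(x, z) = p_s^\varepsilon(z, x)$, and hence by the semigroup property
\[
P_{t/2}^\varepsilon \bigl( p_{t/2}^\varepsilon(\cdot, x) \bigr)(y) = \int_{\bM} p_{t/2}^\varepsilon(y, z) \, p_{t/2}^\varepsilon(z, x) \, d\mu_\varepsilon(z) = p_t^\varepsilon(x, y).
\]
Applying Proposition~\ref{log_harnack} with $t$ replaced by $t/2$ and with this choice of $f$ (after a standard truncation/approximation to guarantee $\inf f > 0$), one obtains
\[
\int_{\bM} p_{t/2}^\varepsilon(x, z) \ln p_{t/2}^\varepsilon(x, z) \, d\mu_\varepsilon(z) \le \ln p_t^\varepsilon(x, y) + \frac{1}{4} \left( \frac{2 (\rho_1 - \kappa/\varepsilon)}{e^{(\rho_1 - \kappa/\varepsilon) t} - 1} \right) d_\Ho^2(x, y),
\]
where on the left-hand side I have used symmetry $p_{t/2}^\varepsilon(z, x) = p_{t/2}^\varepsilon(x, z)$, and where the constant arises because the log-Harnack coefficient $\frac{2\tilde\rho}{e^{2\tilde\rho (t/2)}-1}$ with $\tilde\rho = \rho_1-\kappa/\varepsilon$ simplifies exactly to the expression in the corollary. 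Since $\mu_\varepsilon$ is a probability measure and $P_{t/2}^\varepsilon 1 = 1$, the density $p_{t/2}^\varepsilon(x, \cdot)$ has total mass one against $\mu_\varepsilon$; by Jensen's inequality applied to the convex function $u \mapsto u \ln u$, the left-hand side is nonnegative, and rearranging yields the claimed lower bound.

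The one technical point that requires care is the truncation: the function $p_{t/2}^\varepsilon(\cdot, x)$ is neither bounded away from zero nor a priori bounded from above, so Proposition~\ref{log_harnack} does not apply to it directly. I would replace $f$ by $f_n := \bigl(p_{t/2}^\varepsilon(\cdot, x) \wedge n\bigr) \vee (1/n)$, apply the log-Harnack inequality to each $f_n$, and then pass to the limit as $n \to \infty$ using monotone and dominated convergence together with the semigroup property. Given the qualitative heat kernel estimates already implicit in the preceding functional inequalities, this limiting procedure is routine and does not present a real obstacle.
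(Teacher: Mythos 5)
Your proof is correct and is exactly the argument the paper intends (the corollary is stated as an immediate consequence of the log-Harnack inequality, and the standard route is the one you take: apply Proposition \ref{log_harnack} at time $t/2$ to $f=p_{t/2}^\varepsilon(\cdot,x)$, use symmetry and the semigroup property to identify $P_{t/2}^\varepsilon f(y)=p_t^\varepsilon(x,y)$, and bound the resulting entropy term from below by $0$ via Jensen). The $t/2$ splitting also explains why the exponent in the corollary is $e^{(\rho_1-\kappa/\varepsilon)t}$ rather than $e^{2(\rho_1-\kappa/\varepsilon)t}$, and your truncation remark correctly handles the only technical point.
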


\subsection{Log-Sobolev inequality}

With Wang-Harnack inequality in hands, we can prove a log-Sobolev inequality provided the exponential integrability of the square distance function. We refer to \cite{SOB} and \cite{W2} for the analogue of this result when the classical Bakry-\'Emery criterion is satisfied.

\begin{theorem}\label{Wang}
Assume that the measure $\mu_\varepsilon$ is a probability measure and that there exists $\lambda >  \frac{\left(\rho_1-\frac{\kappa}{\varepsilon}\right)_-}{2}$ such that:  %and that $L$ satisfies the generalized curvature dimension inequality \emph{CD}$(\rho_1,\rho_2,\kappa,\infty)$ for some $\rho_1 \in \mathbb{R} $,  $\rho_2 >0$, $\kappa \ge 0$.  Assume moreover that
\[
\int_\bM e^{\lambda d_\Ho^2 (x_0,x )} d\mu_\ve(x) <+\infty,
\]
for some some $x_0 \in \bM$.  Then, there is a constant $C>0$ such that for every function $f  \in C^ \infty_0(\bM)$,
\[
\int_\bM f^2 \ln f^2 d\mu_\varepsilon -\int_\bM f^2 d\mu_\varepsilon \ln  \int_\bM f^2 d\mu_\varepsilon \le C \int_\bM \Gamma_\varepsilon(f) d\mu_\varepsilon.
\]
\end{theorem}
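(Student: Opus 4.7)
The plan is to turn the Wang-Harnack inequality of Proposition \ref{Wang inequality} into hyperboundedness of $P_t^\varepsilon$ using the exponential integrability hypothesis, and then to invoke Gross's theorem to obtain a defective log-Sobolev inequality, which will be tightened to the stated log-Sobolev inequality via a spectral gap.

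First I would rewrite the Wang inequality of Proposition \ref{Wang inequality} in the equivalent form
\[
P_t^\varepsilon (f^\alpha)(y)\ge (P_t^\varepsilon f)^\alpha(x)\,\exp\!\left(-\Phi(\alpha,t)\,d_\Ho^2(x,y)\right),
\]
with $\Phi(\alpha,t)=\frac{\alpha}{4(\alpha-1)}\cdot\frac{2\ro}{e^{2\ro t}-1}$ and $\ro=\rho_1-\kappa/\varepsilon$. Integrating in $y$ against $\mu_\varepsilon$ and using that $\mu_\varepsilon$ is invariant under $P_t^\varepsilon$ then yields the pointwise bound
\[
(P_t^\varepsilon f)(x)\le \left(\int_\bM f^\alpha\,d\mu_\varepsilon\right)^{1/\alpha}\left(\int_\bM e^{-\Phi(\alpha,t)d_\Ho^2(x,y)}\,d\mu_\varepsilon(y)\right)^{-1/\alpha}.
\]

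Next I would estimate the remaining integral from below. For every $s>0$, the triangle inequality gives $d_\Ho^2(x,y)\le (1+s)d_\Ho^2(x_0,x)+(1+s^{-1})d_\Ho^2(x_0,y)$, and Jensen's inequality combined with the finiteness of $\int d_\Ho^2(x_0,\cdot)\,d\mu_\varepsilon$ (a direct consequence of the exponential integrability assumption) yields a lower bound of the shape $K(s,\alpha,t)\,\exp(-(1+s)\Phi(\alpha,t)d_\Ho^2(x_0,x))$ with $K(s,\alpha,t)>0$. Raising the resulting pointwise bound for $P_t^\varepsilon f$ to the $p$-th power and integrating over $\mu_\varepsilon$ gives an estimate $\|P_t^\varepsilon f\|_{L^p(\mu_\varepsilon)}\le C\|f\|_{L^\alpha(\mu_\varepsilon)}$ as soon as $(1+s)p\,\Phi(\alpha,t)/\alpha<\lambda$. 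The key observation is that
\[
\inf_{\alpha>1}\,\inf_{t>0}\Phi(\alpha,t)=\ro_-/2,
\]
so under the assumption $\lambda>\ro_-/2$ one can choose $\alpha$ large, $t$ large, $s$ small and $p>\alpha$ making the required inequality hold. Thus $P_t^\varepsilon$ is hyperbounded: $\|P_t^\varepsilon\|_{L^\alpha\to L^p}<\infty$ for some $p>\alpha$ and some $t>0$.

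Finally, by Gross's theorem the hyperboundedness of the symmetric Markov semigroup $P_t^\varepsilon$ on $L^2(\mu_\varepsilon)$ is equivalent to a defective log-Sobolev inequality
\[
\int f^2\ln f^2\,d\mu_\varepsilon-\int f^2\,d\mu_\varepsilon\,\ln\!\int f^2\,d\mu_\varepsilon\le C_1\int \Gamma_\varepsilon(f)\,d\mu_\varepsilon+C_2\int f^2\,d\mu_\varepsilon.
\]
Since a hyperbounded symmetric Markov semigroup on a probability space is compact on $L^2$, the generator $\Delta_\varepsilon$ admits a spectral gap, hence a Poincar\'e inequality, and Rothaus's lemma upgrades the defective log-Sobolev inequality into the tight one stated in the theorem.

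The main obstacle is the sharp threshold $\lambda>\ro_-/2$ in the hyperboundedness step: the factor $\tfrac12$ only appears upon optimizing jointly in the Young splitting parameter $s$, the Harnack exponent $\alpha$ and the time $t$, exploiting that $\frac{\alpha}{2(\alpha-1)}\to\tfrac12$ as $\alpha\to\infty$ while $C(t):=2\ro/(e^{2\ro t}-1)$ tends to $2\ro_-$ as $t\to\infty$. The tightening step is softer but depends crucially on $\mu_\varepsilon$ being a probability measure, which is part of the hypotheses.
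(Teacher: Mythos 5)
Your overall architecture is exactly the paper's: integrate the Wang--Harnack inequality of Proposition \ref{Wang inequality} in $y$ to get a pointwise bound on $P_t^\varepsilon f$ in terms of $\|f\|_{L^\alpha}$ times a Gaussian weight in $d_\Ho(x_0,x)$, observe that the joint infimum over $\alpha>1$ and $t>0$ of the Harnack exponent is $\ro_-/2$ so the integrability hypothesis yields hyperboundedness $\|P_t^\varepsilon\|_{L^\alpha\to L^\beta}<\infty$ for some $\beta>\alpha$, apply Gross's theorem to get a defective log-Sobolev inequality, and tighten it using a Poincar\'e inequality. Your lower bound on $\int e^{-\Phi\, d_\Ho^2(x,\cdot)}\,d\mu_\varepsilon$ via Jensen and a Young splitting is a legitimate variant of the paper's device of restricting the integral to a fixed ball $B(x_0,1)$; both produce the same threshold $\lambda>\ro_-/2$.

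The one step that fails as written is your derivation of the spectral gap: you assert that a hyperbounded symmetric Markov semigroup on a probability space is compact on $L^2$. This is false in general --- the infinite-dimensional Ornstein--Uhlenbeck semigroup is hypercontractive yet not compact, each Wiener chaos being an infinite-dimensional eigenspace --- and hyperboundedness alone gives no compactness even in finite dimensions. Whether a defective log-Sobolev inequality forces a spectral gap was precisely the Kusuoka--Aida problem, so it cannot be disposed of by a one-line compactness claim. The paper closes this step by invoking Aida's uniform positivity improving criterion (\cite{A}, Theorem 2.11), using that the heat kernel is strictly positive and $\mu_\varepsilon$ is a probability measure; alternatively one may appeal to Miclo's theorem that a hyperbounded self-adjoint Markov operator has a spectral gap. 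With either of these substituted for the compactness claim, your Rothaus tightening goes through and the argument is complete.
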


\begin{proof}
For simplicity, as before we write $P_t$ for $P_t^\ve$ and set  $\ro:=\left( \rho_1-\frac{\kappa}{\varepsilon}\right)$.
Let $\alpha >1$ and $f \in L^\infty(\bM)$, $ f \ge 0$. From Proposition \ref{Wang inequality}, by integrating with respect to $y$, we have
\begin{align*}
\int_\bM f^\alpha (y) d\mu_\ve  (y) & \ge (P_t f)^\alpha (x)   \int_\bM \exp\left( -\frac{\alpha}{4(\alpha-1)}   \left( \frac  {  2\ro } {e^{  2\ro t} -1} \right)  d_\Ho^2(x,y) \right) d\mu_\ve(y) \\
 &  \ge (P_t f)^\alpha (x)   \int_{B(x_0,1)} \exp\left( -\frac{\alpha}{4(\alpha-1)}   \left( \frac  { 2 \ro } {e^{  2\ro t} -1} \right)  d_\Ho^2(x,y) \right) d\mu_\ve(y) \\
 & \ge \mu_\ve( B(x_0,1)) (P_t f)^\alpha (x) \exp\left( -\frac{\alpha}{4(\alpha-1)}   \left( \frac  {  2\ro } {e^{  2\ro t} -1} \right) (d_\Ho^2(x_0,x)+1) \right).
\end{align*}
As a consequence, we get
\[
(P_t f) (x) \le \frac{1}{ \mu_\ve( B_\Ho(x_0,1))^\frac{1}{\alpha}}   \exp\left( \frac{1}{2(\alpha-1)}   \left( \frac  {  \ro } {e^{ 2 \ro t} -1} \right)  (d_\Ho^2(x_0,x)+1) \right) \| f \|_{L^\alpha} .
\]
Therefore if
\[
\int_\bM e^{\lambda d_\Ho^2 (x_0,x )} d\mu_\ve(x) <+\infty,
\]
for some $x_0 \in \bM$ and $\lambda > \frac{\ro_-}{2}$, then we can find $1< \alpha <\beta$  and $t>0$ such that
\[
\| P_t f \|_{L^\beta} \le C_{\alpha,\beta} \| f \|_{L^\alpha} ,
\]
for some constant $C_{\alpha,\beta}$.  This implies the supercontractivity of the semigroup $(P_t)_{t \ge 0}$. We deduce therefore from Gross' theorem (see \cite{bakry-stflour}) that  a defective logarithmic Sobolev inequality 
(with the full gradient)
holds; that is: there exist two constants $A,B>0$  such that
\[
\int_\bM f^2 \ln f^2 d\mu_\varepsilon -\int_\bM f^2 d\mu_\varepsilon \ln  \int_\bM f^2 d\mu^\varepsilon \le A \int_\bM \Gamma_\varepsilon(f) d\mu+B\int_{\bM} f^2 d\mu_\varepsilon, \quad f \in  C^ \infty_0(\bM).
\]
Now, since moreover the heat kernel is positive and the invariant measure a probability, the uniform positivity improving property (see \cite{A}, Theorem 2.11) implies that $L$ admits a spectral gap.
or equivalently that a Poincar\'e inequality is satisfied. It is then classical (see \cite{SOB}), that the conjunction of a spectral gap and a defective logarithmic Sobolev inequality implies the log-Sobolev inequality (i.e. we may actually take $B=0$ in the above inequality).  
\end{proof}

\subsection{Poincar\'e inequality}
 In the case where the curvature parameter  $\left(\rho_1 -\frac{\kappa}{\varepsilon}\right)$ is positive, we get an explicit constant for the spectral gap.

\begin{proposition}
Assume that $\mu_\varepsilon$ is a finite measure and that the sub-elliptic distance $d_\Ho$ is square integrable; that is 
 \[
  \int_\M d_\Ho(x_0,y)^2 d\mu_\ve(y) <+ \infty 
 \]
for some $x_0\in \M$.
 Assume  moreover that $\rho_1 -\frac{\kappa}{\varepsilon}>0$,  
 Then
%  \[
%   \lambda_1^\varepsilon \geq  \left(\rho_1 -\frac{\kappa}{\varepsilon}\right)
%  \] 
the following Poincar\'e inequality holds:
 \[
  \Var_{\mu_\varepsilon} (f) \leq \frac{1}{ \left(\rho_1 -\frac{\kappa}{\varepsilon}\right) } \int_\bM \Gamma_\varepsilon (f,f) d\mu_\varepsilon.
 \]

\end{proposition}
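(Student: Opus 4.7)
Plan: The strategy is to derive the Poincar\'e inequality from an exponential $L^2$-contraction of $P_t^\varepsilon$ on the mean-zero subspace of $L^2(\mu_\varepsilon)$, which in turn follows from the horizontal gradient bound \eqref{e:2} of Proposition \ref{p:reg}. Throughout I set $\tilde\rho := \rho_1 - \kappa/\varepsilon > 0$.

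Integrating the pointwise bound \eqref{e:2} against $d\mu_\varepsilon$ and using the invariance of $\mu_\varepsilon$ under $P_t^\varepsilon$ gives
\[
\int_\M \|\nabla_\Ho P_t^\varepsilon f\|^2 \, d\mu_\varepsilon \le e^{-2\tilde\rho t}\int_\M \|\nabla_\Ho f\|^2 \, d\mu_\varepsilon.
\]
Since $\Delta_\Ho$ is self-adjoint with respect to $\mu_\varepsilon$, this rewrites as $\|(-\Delta_\Ho)^{1/2} P_t^\varepsilon f\|_{L^2}^2 \le e^{-2\tilde\rho t} \|(-\Delta_\Ho)^{1/2} f\|_{L^2}^2$. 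Invoking the commutation $[\Delta_\Ho, \Delta_\V] = 0$, available in this Yang--Mills totally geodesic setting (as in the Heisenberg group, Hopf fibration, and CR spheres), one has $[\Delta_\Ho, \Delta_\varepsilon] = 0$, so $(-\Delta_\Ho)^{1/2}$ commutes with $P_t^\varepsilon$. Setting $g := (-\Delta_\Ho)^{1/2} f$ transforms the bound into $\|P_t^\varepsilon g\|_{L^2}^2 \le e^{-2\tilde\rho t}\|g\|_{L^2}^2$. Because $\Ho$ is bracket-generating, $\ker(-\Delta_\Ho) = \mathbb{R}\cdot\mathbf{1}$ by H\"ormander's theorem, so the range of $(-\Delta_\Ho)^{1/2}$ is $L^2$-dense in the mean-zero subspace, and by continuity of $P_t^\varepsilon$ on $L^2(\mu_\varepsilon)$ the decay extends to every mean-zero $h \in L^2(\mu_\varepsilon)$.

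Differentiating the contraction $\|P_t^\varepsilon h\|_{L^2}^2 \le e^{-2\tilde\rho t}\|h\|_{L^2}^2$ at $t = 0$ and using $\frac{d}{dt}\|P_t^\varepsilon h\|_{L^2}^2 \big|_{t=0} = -2\int\Gamma_\varepsilon(h)\,d\mu_\varepsilon$ yields $\int\Gamma_\varepsilon(h)\,d\mu_\varepsilon \ge \tilde\rho \|h\|_{L^2}^2 = \tilde\rho \, \Var_{\mu_\varepsilon}(h)$, which is the claimed Poincar\'e inequality. The main obstacle is justifying the commutation $[\Delta_\Ho, \Delta_\V] = 0$ under only the paper's hypotheses: while it holds in the standard sub-Riemannian examples, in general one must verify that the totally geodesic and Yang--Mills assumptions suffice, by direct computation from the Bott connection formulas of Section~2. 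An alternative route would be to establish a pointwise vertical gradient bound for $P_t^\varepsilon$ and then conclude via the variance formula $\Var_{\mu_\varepsilon}(f) = 2\int_0^\infty \int\Gamma_\varepsilon(P_t^\varepsilon f)\, d\mu_\varepsilon\, dt$ together with Cauchy--Schwarz applied to each component of $\Gamma_\varepsilon$.
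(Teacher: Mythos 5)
Your argument hinges on the commutation $[\Delta_{\mathcal H},\Delta_{\mathcal V}]=0$, and as you yourself concede, you do not establish it: you justify it only by analogy with the Heisenberg group, the Hopf fibration and the CR spheres, and then flag it as ``the main obstacle.'' That is a genuine gap. The identity is in fact true for Riemannian submersions with totally geodesic fibres (it is a nontrivial theorem of B\'erard-Bergery and Bourguignon, not a routine computation from the Bott connection formulas of Section~2), but it is nowhere proved or cited in the paper, and without it your chain $\|(-\Delta_{\mathcal H})^{1/2}P_t^\varepsilon f\|_{L^2}\le e^{-\tilde\rho t}\|(-\Delta_{\mathcal H})^{1/2}f\|_{L^2}\Rightarrow\|P_t^\varepsilon g\|_{L^2}\le e^{-\tilde\rho t}\|g\|_{L^2}$ on mean-zero functions collapses. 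A further warning sign is that your proof never uses the hypothesis $\int_{\M} d_{\mathcal H}(x_0,y)^2\,d\mu_\varepsilon(y)<+\infty$; if your route were complete it would prove a strictly stronger statement than the one asserted, which should have prompted you to re-examine the unproved step. (The remaining pieces of your argument --- integrating \eqref{e:2} against the invariant measure, the density of the range of $(-\Delta_{\mathcal H})^{1/2}$ in the mean-zero subspace via the bracket-generating hypothesis, and differentiating the $L^2$-contraction at $t=0$ --- are sound modulo standard domain issues.)

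For comparison, the paper's proof is much more elementary and explains the role of the square-integrability hypothesis. It uses the $L^\infty$ gradient bound \eqref{e:reg_0} and integrates it along subunit curves to get the Lipschitz estimate $|P_t^\varepsilon f(x)-P_t^\varepsilon f(y)|\le e^{-\tilde\rho t}\,d_{\mathcal H}(x,y)\,\|\nabla_{\mathcal H}f\|_\infty$; writing the variance as a double integral and invoking $d_{\mathcal H}^2\in L^1(\mu_\varepsilon\otimes\mu_\varepsilon)$ then gives $\mathrm{Var}_{\mu_\varepsilon}(P_t^\varepsilon f)\le C_f\,e^{-2\tilde\rho t}$ for compactly supported smooth $f$, and the standard criterion (exponential decay of the variance on a dense class at rate $2\lambda$ implies a spectral gap $\lambda$; \cite{CGZ}, Lemma 2.12) yields the Poincar\'e inequality with constant $1/\tilde\rho$. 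If you want to pursue your spectral route, you would need to supply a proof of the commutation in the general totally geodesic setting; otherwise you should fall back on the variance-decay argument.
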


\begin{proof} Take $f$ a smooth function with compact support. 
Integrating Proposition \ref{p:reg} along a geodesic between $x,y\in\bM$ , we infer that:
\[
 |P_t^\ve f(x) -P_t^\ve f (y)| \leq e^{-  \left(\rho_1 -\frac{\kappa}{\varepsilon}\right)t} d_\Ho(x,y) \| \nabla_\Ho f \|_\infty.
\]

 \begin{align*}
  \Var_{\mu_\varepsilon} (P_t^\ve f) &= \frac{1}{2}  \int_\bM \int_\bM ( P_t^\ve f(x)- P_t^\ve f(y) )^2 d\mu_\varepsilon(x) d\mu_\varepsilon(y)\\
                                     & \leq  \frac{1}{2} e^{- 2 \left(\rho_1 -\frac{\kappa}{\varepsilon}\right)t} \| \nabla_\Ho f \|_\infty^2 \int_\bM \int_\bM d_\Ho(x,y)^2 d\mu_\varepsilon(x) d\mu_\varepsilon(y)\\
                                     &=C_f \, e^{- 2 \left(\rho_1 -\frac{\kappa}{\varepsilon}\right)t}
 \end{align*} for some constant $C_f$ depending on $f$.
Now it is well known that this is enough to imply the desired Poincar\'e inequality (see for example \cite{CGZ}, Lemma 2.12). 
\end{proof}

\subsection{A bound of the entropy of the semigroup by  the $L^2$-Wasserstein distance}

The following result is an analogue of the Lemma 4.2 in \cite{BGL} (see also \cite{OV2}). 
\begin{proposition}\label{entropy_wasserstein}
%Assume that $L$ satisfies the generalized curvature dimension inequality \emph{CD}$(\rho_1,\rho_2,\kappa,\infty)$ for some $\rho_1 \in \mathbb{R} $,  $\rho_2 >0$, $\kappa \ge 0$. 
Let $f$ and $g$ be  non negative functions on $\bM$ such that $\int_\bM f d\mu=\int_\bM g d\mu= 1$ and set $d\nu_0= g d\mu$ et $d\nu_1=gd\mu$. Then, for any $t>0$,
$$
\Ent_{\mu_\ve}( P^\ve_t f) \leq \Ent_{\mu_\ve}( g) +\frac{1}{2} \frac  { \left( \rho_1 -\frac{\kappa}{\varepsilon} \right) } {e^{ 2\left( \rho_1 -\frac{\kappa}{\varepsilon} \right) t} -1} \W_\Ho(\nu_0,\nu_1)^2.
$$
where $\W_\Ho$ is the 2-Wasserstein distance with respect to the sub-elliptic distance $d_\Ho$.
\end{proposition}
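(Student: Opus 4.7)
My plan is to apply the log-Harnack inequality of Proposition \ref{log_harnack} to the positive function $h=P_t^{\varepsilon}f$, and then to integrate the resulting pointwise inequality against an optimal coupling of $\nu_0$ and $\nu_1$ for $\W_\Ho$ (understanding, as the statement seems to intend, that $\nu_0=f\,d\mu_\ve$ and $\nu_1=g\,d\mu_\ve$). The choice $h=P_t^\ve f$ is tailored so that, after an integration by self-adjointness of $P_t^\ve$, the left-hand side reproduces exactly $\Ent_{\mu_\ve}(P_t^\ve f)$, while the right-hand side only involves $\log P_{2t}^\ve f$, the logarithm of a probability density, which can in turn be compared to $\Ent_{\mu_\ve}(g)$ by Jensen.

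Concretely, Proposition \ref{log_harnack} applied with $h=P_t^\ve f$ reads
\[
P_t^\ve(\log P_t^\ve f)(x) \le \log P_{2t}^\ve f(y) + \frac{1}{2}\,\frac{\rho_1-\frac{\kappa}{\varepsilon}}{e^{2(\rho_1-\frac{\kappa}{\varepsilon})t}-1}\, d_\Ho^2(x,y),
\]
valid for all $x,y \in \bM$. Letting $\pi$ be an optimal coupling of $\nu_0$ and $\nu_1$ for $\W_\Ho$ and integrating the above inequality against $\pi$, the self-adjointness of $P_t^\ve$ on $L^2(\mu_\ve)$ rewrites the left-hand side as
\[
\int_\bM P_t^\ve(\log P_t^\ve f)\,f\,d\mu_\ve = \int_\bM (\log P_t^\ve f)(P_t^\ve f)\,d\mu_\ve = \Ent_{\mu_\ve}(P_t^\ve f),
\]
where we have used that $\int P_t^\ve f\,d\mu_\ve = \int f\,d\mu_\ve = 1$. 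For the corresponding term on the right, Jensen's inequality applied to the probability measure $g\,d\mu_\ve$ gives
\[
\int_\bM g\, \log\!\Big(\tfrac{P_{2t}^\ve f}{g}\Big)\,d\mu_\ve \le \log \int_\bM P_{2t}^\ve f\,d\mu_\ve = 0,
\]
so that $\int_\bM (\log P_{2t}^\ve f)\,g\,d\mu_\ve \le \int_\bM g\log g\,d\mu_\ve = \Ent_{\mu_\ve}(g)$. Combining the three displays yields the claimed inequality.

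I do not expect any substantial obstacle. The geometric content is entirely packaged in Proposition \ref{log_harnack}, and the remaining argument is the standard ``log-Harnack $\Rightarrow$ transport-entropy'' implication in the spirit of \cite{BGL} and \cite{OV2}. The only routine precautions are to ensure strict positivity of $P_t^\ve f$ (obtained by replacing $f$ by $f+\delta$ and letting $\delta\downarrow 0$) and to dismiss the trivial case $\W_\Ho(\nu_0,\nu_1) = +\infty$, in which there is nothing to prove.
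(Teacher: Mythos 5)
Your proof is correct and rests on exactly the same two ingredients as the paper's: the log-Harnack inequality of Proposition \ref{log_harnack} applied to $h=P_t^\ve f$ (so that self-adjointness turns the left side into $\Ent_{\mu_\ve}(P_t^\ve f)$ and the semigroup property produces $\ln P_{2t}^\ve f$ on the right), followed by Jensen's inequality to bound $\int g\ln P_{2t}^\ve f\,d\mu_\ve$ by $\Ent_{\mu_\ve}(g)$. The only place you diverge is in how the Wasserstein distance enters: the paper takes the infimum over $y$ to form the inf-convolution $Q_s(\ln P_{2t}^\ve f)$ and then invokes Monge--Kantorovich duality, whereas you integrate the pointwise inequality directly against an (near-)optimal coupling of $\nu_0=f\,d\mu_\ve$ and $\nu_1=g\,d\mu_\ve$. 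Your primal-side argument is the more elementary of the two, since it bypasses the duality theorem and the $Q_s$ semigroup entirely; the paper's dual formulation is the one that generalizes to the hypercontractivity statements of \cite{BGL}. Your reading of $\nu_0,\nu_1$ is the intended one (the statement's ``$d\nu_0=g\,d\mu$ et $d\nu_1=g\,d\mu$'' is a typo, and $\W_\Ho$ is symmetric in any case), and your closing precautions (strict positivity via $f+\delta$, the trivial case $\W_\Ho=+\infty$) are the right ones.
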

\begin{proof}As before we write $P_t$ for $P_t^\ve$ and set  $\ro:=\left( \rho_1-\frac{\kappa}{\varepsilon}\right)$.
Let $t>0$ and $f$ and $g$ be  positive functions on $\bM$ such that $\int_\bM f d\mu=\int_\bM g d\mu= 1$. The log-Harnack inequality of Proposition \ref{log_harnack} applied to the function $P_tf$ gives then:

$$
P_t(\ln P_tf)(x)\leq \ln P_{2t} (f)(y) + \frac{1}{s} d_\Ho^2(x,y) .
$$
with 
$$ s=4 \left( \frac {e^{2\ro t} -1} { 2\ro }  \right).
$$
For $x$ fixed, by taking the infimum with respect to $y$ on the right hand side of the last inequality, we obtain
$$
P_t(\ln P_t f) (x) \leq  Q_s (\ln P_{2t}f) (x)
$$ 

where $Q_s$ is the infimum-convolution semigroup:
$$Q_s (\phi)(x)= \inf_{y\in \bM} \left\{ \phi(y)+ \frac{1}{s} d_ \Ho(x,y)^2 \right\}.$$

%Setting $\phi= \ln P_{2t}f$, 
By Jensen inequality, one has
$$
\int_\bM \ln P_{2t}f \, g \, d{\mu_\ve} - \int_\bM g\ln g d\mu_\ve=\int_\bM g \ln \left(\frac{P_{2t}f}{g}\right)  d\mu_\ve \leq \ln \left(  \int_\bM P_{2t} f d\mu_\ve\right) =0;
$$
thus, using symmetry, one finally gets:
\begin{align*}
\Ent_{\mu_\ve} (P_t f) &= \int_\bM f P_{t} (\ln P_t f) d\mu_\ve\\
   & \leq  \int_\bM   f Q_s (\ln P_{2t}f) d\mu_\ve\\
   &  \leq \int_\bM   Q_s (\ln P_{2t}f) d\nu_1  - \int_\bM  g \, \ln P_{2t}f d\mu_\ve + \Ent_{\mu_\ve}(g)\\
   & \leq  \Ent_{\mu_\ve}(g) + \sup_{\psi} \left\{ \int_\bM Q_s (\psi)  d\nu_1 -\int_\bM \psi d\nu_0 \right\}
\end{align*}
where the supremum is taken over all bounded mesurable functions  $\psi$.  By Monge-Kantorovich duality, 
$$
\sup_{\psi} \left\{ \int_\bM Q_s (\psi) d\nu_1 -\int_\bM \psi d\nu_0 \right\} = \inf_\Pi \int_\bM T(x,y) d\Pi (x,y)
$$
where the infimum is taken over all coupling of $\nu_0$ and $\nu_1$ % that is on all probability measures on $\bM\times \bM$ whose marginals are respectively $\mu$ and  $\nu$. 
 and where the cost $T$ is just 
$$
T(x,y)= \frac{1}{s} d_\Ho^2(x,y).
$$
Therefore the latter infimum is equal to $\frac{1}{s} \W_\Ho(\nu_0,\nu_1)^2$.
\end{proof}

\section{Generalized curvature dimension estimates for the Laplace-Beltrami operator}

In this section we  assume  that for every  $X \in \Gamma^\infty(\mathcal{H})$,
\[
\mathbf{Ricci}_\mathcal{H} (X,X) \ge \rho_1 \| X \|^2, \quad  \| \mathbf{J} X \|^2 \le \kappa \| X \|^2,
\]
and for every $Z \in \Gamma^\infty(\mathcal{V})$
$$ -\frac{1}{4} \mathbf{Tr}_\mathcal{H} (J^2_{Z})\ge \rho_2 \| Z \|^2_\mathcal{V}, \quad   \mathfrak{Ric}_{\mathcal{V}} (Z,Z)  \ge  \rho_3 \| Z \|^2.$$

We denote
\[
\Gamma_\varepsilon (f)= \| \nabla_\Ho f\|^2 +\varepsilon \| \nabla_\V f \|^2
\]
\[
\Gamma_{2,\varepsilon}  (f) = \frac{1}{2} \Delta_{\varepsilon} \Gamma_\varepsilon (f) -\Gamma_\varepsilon ( f, \Delta_\varepsilon f),
 \]
 \[
 \Gamma^\V_{2,\varepsilon}  (f) = \frac{1}{2} \Delta_{\varepsilon} \| \nabla_\V f \|^2  - \langle \nabla_\V f, \nabla_\V \Delta_\varepsilon f\rangle.
 \]
 
The following proposition shows that under the previous assumptions, the generalized curvature dimension inequality introduced in \cite{BG} is uniformly satisfied, at least when $\rho_1,\rho_3 \ge 0$,  for the family of operators $\Delta_\varepsilon$, $\varepsilon \ge 0$. 

\begin{proposition}
For every $f \in C^\infty(\M)$, and every $\nu \ge -\varepsilon$,
\begin{align*}
 & \Gamma_{2,\varepsilon}  (f) +\nu \Gamma^\V_{2,\varepsilon} (f) \\
  \ge & \frac{1}{n+\frac{m\varepsilon}{\nu+\varepsilon}} ( \Delta_\varepsilon f )^2+\left( \rho_1-\frac{\kappa}{2\varepsilon+\nu} \right) \Gamma_\varepsilon (f) +\left( \rho_2 -\rho_1 \varepsilon +\frac{\kappa \varepsilon}{2\varepsilon +\nu} +\rho_3 \varepsilon (\nu+\varepsilon) \right)\Gamma^\V (f).
\end{align*}
\end{proposition}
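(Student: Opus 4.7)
My plan is to unpack the left hand side using the Bochner identities of Proposition \ref{boch}, reduce the problem to the horizontal curvature dimension inequality of \cite{BKW} together with two simple leafwise estimates, and then close with a weighted Cauchy--Schwarz step that produces the precise effective dimension $n+\tfrac{m\varepsilon}{\nu+\varepsilon}$.

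\textbf{Step 1: algebraic expansion.} Since $\Gamma_\varepsilon(f)=\|\nabla_\Ho f\|^2+\varepsilon\|\nabla_\V f\|^2$, I split
\[
\Gamma_{2,\varepsilon}(f) = \Bigl[\tfrac12\Delta_\varepsilon\|\nabla_\Ho f\|^2-\langle\nabla_\Ho \Delta_\varepsilon f,\nabla_\Ho f\rangle\Bigr]+\varepsilon\Bigl[\tfrac12\Delta_\varepsilon\|\nabla_\V f\|^2-\langle\nabla_\V\Delta_\varepsilon f,\nabla_\V f\rangle\Bigr]
\]
and apply the two Bochner identities of Proposition \ref{boch} to each bracket. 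Adding $\nu\,\Gamma_{2,\varepsilon}^\V(f)$ and using $\|\nabla^2_{\V,\Ho}f\|^2=\|\nabla^2_{\Ho,\V}f\|^2$, the identity
\[
\Gamma_{2,\varepsilon}(f)+\nu\,\Gamma_{2,\varepsilon}^\V(f)=\Gamma_2^\Ho(f)+(2\varepsilon+\nu)\|\nabla^2_{\V,\Ho}f\|^2+\varepsilon(\varepsilon+\nu)\|\nabla^2_\V f\|^2+\varepsilon(\varepsilon+\nu)\mathbf{Ric}_\V(\nabla_\V f,\nabla_\V f)
\]
falls out after tracking the coefficients.

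\textbf{Step 2: three standard lower bounds.} Since $\nu\ge-\varepsilon$, we have $2\varepsilon+\nu\ge\varepsilon>0$, and the horizontal curvature-dimension inequality of \cite{BKW}, applied with parameter $2\varepsilon+\nu$, gives
\[
\Gamma_2^\Ho(f)+(2\varepsilon+\nu)\|\nabla^2_{\V,\Ho}f\|^2\ge\tfrac1n(\Delta_\Ho f)^2+\Bigl(\rho_1-\tfrac{\kappa}{2\varepsilon+\nu}\Bigr)\|\nabla_\Ho f\|^2+\rho_2\|\nabla_\V f\|^2.
\]
Cauchy--Schwarz on the $m$-dimensional leaves yields $\|\nabla^2_\V f\|^2\ge\tfrac1m(\Delta_\V f)^2$, and the assumption on $\mathfrak{Ric}_\V$ gives $\mathbf{Ric}_\V(\nabla_\V f,\nabla_\V f)\ge\rho_3\|\nabla_\V f\|^2$. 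Inserting these three bounds into the identity of Step 1 leaves the quantity
\[
\tfrac1n(\Delta_\Ho f)^2+\tfrac{\varepsilon(\varepsilon+\nu)}{m}(\Delta_\V f)^2
\]
plus the already-clean horizontal and vertical gradient terms.

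\textbf{Step 3: combining the two squared Laplacians.} The final and most delicate step is to collapse $\tfrac1n(\Delta_\Ho f)^2+\tfrac{\varepsilon(\varepsilon+\nu)}{m}(\Delta_\V f)^2$ into a multiple of $(\Delta_\varepsilon f)^2=(\Delta_\Ho f+\varepsilon\Delta_\V f)^2$. I use the elementary inequality $\tfrac{x^2}{p}+\tfrac{y^2}{q}\ge\tfrac{(x+y)^2}{p+q}$ with $x=\Delta_\Ho f$, $y=\varepsilon\Delta_\V f$, $p=n$, and $q=\tfrac{m\varepsilon}{\nu+\varepsilon}$, which are matched precisely so that $\tfrac{y^2}{q}=\tfrac{\varepsilon(\varepsilon+\nu)}{m}(\Delta_\V f)^2$. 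This gives exactly $p+q=n+\tfrac{m\varepsilon}{\nu+\varepsilon}$ as the effective dimension. Getting this matching of weights right is the one genuine obstacle; any other splitting yields a strictly worse constant. Finally, regrouping so that the horizontal coefficient $\rho_1-\tfrac{\kappa}{2\varepsilon+\nu}$ appears multiplying the full $\Gamma_\varepsilon(f)$ requires adding and subtracting $\varepsilon(\rho_1-\tfrac{\kappa}{2\varepsilon+\nu})\|\nabla_\V f\|^2$, which explains the somewhat asymmetric form $\rho_2-\rho_1\varepsilon+\tfrac{\kappa\varepsilon}{2\varepsilon+\nu}+\rho_3\varepsilon(\nu+\varepsilon)$ of the $\Gamma^\V(f)$ coefficient. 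The limiting case $\nu=-\varepsilon$ degenerates cleanly: the coefficient of $(\Delta_\varepsilon f)^2$ vanishes and the inequality reduces to the conclusion of Step 2.
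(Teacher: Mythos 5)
Your proof is correct and follows essentially the same route as the paper: the same expansion via the Bochner identities of Proposition \ref{boch}, the same appeal to the transverse Weitzenb\"ock inequality of \cite{BKW} with parameter $2\varepsilon+\nu$, and the same Cauchy--Schwarz step producing the effective dimension $n+\tfrac{m\varepsilon}{\nu+\varepsilon}$ (the paper merely states this last step in the compressed form $\|\nabla_\Ho^2 f\|^2+\varepsilon(\varepsilon+\nu)\|\nabla_\V^2 f\|^2\ge\tfrac{1}{n+m\varepsilon/(\nu+\varepsilon)}(\Delta_\varepsilon f)^2$, which unwinds to exactly your weighted inequality $\tfrac{x^2}{p}+\tfrac{y^2}{q}\ge\tfrac{(x+y)^2}{p+q}$). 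Your explicit bookkeeping of the regrouping that yields the coefficient $\rho_2-\rho_1\varepsilon+\tfrac{\kappa\varepsilon}{2\varepsilon+\nu}+\rho_3\varepsilon(\nu+\varepsilon)$ is a welcome addition the paper leaves implicit.
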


\begin{proof}
From Proposition \ref{boch}, we have
\[
\frac{1}{2} \Delta_{\varepsilon} \| \nabla_\mathcal H f\|^2 -\langle \nabla_{\mathcal{H}} \Delta_{\varepsilon} f, \nabla_{\mathcal{H}} f\rangle=\Gamma_2^\mathcal H (f) +\varepsilon \| \nabla_{\V,\Ho}^2 f \|^2
\]
and
\[
\frac{1}{2} \Delta_{\varepsilon} \| \nabla_\mathcal V f\|^2 -\langle \nabla_{\mathcal{V}} \Delta_{\varepsilon} f, \nabla_{\mathcal{V}} f\rangle=\varepsilon  \| \nabla_{\V}^2 f \|^2  + \varepsilon \mathbf{Ric}_\V (\nabla_\V f , \nabla_\V f) + \| \nabla_{\Ho,\V}^2 f \|^2.
\]
Therefore we have
\begin{align*}
 & \Gamma_{2,\varepsilon}  (f) +\nu \Gamma^\V_{2,\varepsilon} (f) \\
 =& \frac{1}{2} \Delta_{\varepsilon}( \| \nabla_\Ho f \|^2+(\varepsilon+\nu) \| \nabla_\V f \|^2 ) - \langle (\nabla_\Ho +(\varepsilon+\nu) \nabla_\V) f, (\nabla_\Ho +(\varepsilon+\nu) \nabla_\V) \Delta_\varepsilon f\rangle \\
 =&\Gamma_2^\mathcal H (f) +(2\varepsilon +\nu) \| \nabla_{\V,\Ho}^2 f \|^2+\varepsilon (\varepsilon +\nu)  \| \nabla_{\V}^2 f \|^2+\varepsilon (\varepsilon +\nu) \mathbf{Ric}_\V (\nabla_\V f , \nabla_\V f).
\end{align*}
From \cite{BKW}, we have
\[
\Gamma_2^\mathcal H (f) +(2\varepsilon +\nu) \| \nabla_{\V,\Ho}^2 f \|^2 \ge  \| \nabla_{\Ho}^2 f \|^2+  \left( \rho_1 -\frac{\kappa}{2\varepsilon+\nu} \right)  \| \nabla_\mathcal{H} f \|^2+\rho_2  \| \nabla_\mathcal{V} f \|^2 .
\]
From Cauchy-Schwarz inequality we get
\[
  \| \nabla_{\Ho}^2 f \|^2 +\varepsilon (\varepsilon +\nu)  \| \nabla_{\V}^2 f \|^2 \ge \frac{1}{n+\frac{m\varepsilon}{\nu+\varepsilon}} ( \Delta_\varepsilon f )^2,
\]
and the conclusion follows.
\end{proof}

The usual Bakry-\'Emery curvature dimension inequality is obtained when $\nu=0$. The Baudoin-Garofalo curvature dimension inequality \cite{BG} for the horizontal Laplacian is obtained when $\varepsilon =0$.


\begin{thebibliography}{10}
 
 \bibitem{A} S. Aida: \emph{Uniform positivity improving property, Sobolev inequalities, and spectral gaps}, Journal of Functional Analysis, (1998), \textbf{158}, pp. 152-185.

\bibitem{SOB} C. An\'e, S. Blach\`ere, D. Chafa\"i, P. Foug\`eres, I. Gentil, F. Malrieu, C. Roberto, G. Scheffer, \emph{Sur les in\'egalit\'es de Sobolev logarithmiques}. 
With a preface by Dominique Bakry and Michel Ledoux. Panoramas et Synth\`eses [Panoramas and Syntheses], \textbf{10}. Soci\'et\'e Math\'ematique de France, Paris, 2000. xvi+217 pp.

%\bibitem{Bak}
%D.  Bakry,
%\emph{Un crit\`ere de non-explosion pour certaines diffusions sur une vari\'et\'e riemannienne compl\`ete}. (French. English summary) [A non-explosion criterium for some diffusions on a complete Riemannian manifold]
%C. R. Acad. Sci. Paris S\'er. I Math. 303 (1986), no. 1, 23-26.

\bibitem{bakry-stflour}
D. Bakry, \emph{L'hypercontractivit\'e et son utilisation en
th\'eorie des semigroupes}, Ecole d'Et\'e de Probabilites de
St-Flour, Lecture Notes in Math, (1994).


 %\bibitem{B}
 %F. Baudoin, \emph{Stochastic analysis on sub-Riemannian manifolds with transverse symmetries}, 2014, Arxiv preprint, http://arxiv.org/abs/1402.4490, To appear in Annals of Probability

\bibitem{baudoin-bakry}
 F. Baudoin, \emph{Bakry-Emery meet Villani}, 2013, Arxiv preprint, http://arxiv.org/abs/1308.4938
  
 \bibitem{survey} F. Baudoin, \emph{Sub-Laplacians and hypoelliptic operators on totally geodesic Riemannian foliations}, Lecture notes Institut Henri Poincare, 2014, http://arxiv.org/abs/1410.3268


\bibitem{BB}
F. Baudoin \&  M. Bonnefont, \emph{Log-Sobolev inequalities for subelliptic operators satisfying a generalized curvature dimension inequality}, Journal of Functional Analysis, Volume 262 ~(2012), 2646--2676.

%\bibitem{BBG}
%F. Baudoin,  M. Bonnefont \& N. Garofalo, \emph{A sub-Riemannian curvature-dimension inequality, volume doubling property and the Poincar\'e inequality}, Math. Ann. 358 (2014), no. 3-4, 833-860.

%\bibitem{BBGM}
%F. Baudoin,  M. Bonnefont, I. Munive \& N. Garofalo, \emph{Volume and distance comparison theorems for sub-Riemannian manifolds}, To appear in Journal of Functional Analysis (2014), Arxiv preprint, http://arxiv.org/abs/1211.0221

 \bibitem{BG}
F. Baudoin \& N. Garofalo, \emph{Curvature-dimension inequalities and Ricci lower bounds for sub-Riemannian manifolds with transverse symmetries}, Arxiv preprint, http://arxiv.org/abs/1101.3590, To appear in the Journal of the EMS.


\bibitem{BGL}
S. Bobkov, I. Gentill \& M. Ledoux, \emph{Hypercontractivity of {H}amilton-{J}acobi equations}, J. Math. Pures Appl. (9), Vol 80, no 7, (2001),669--696.


%\bibitem{BG2}
%F. Baudoin \& N. Garofalo,
%\emph{A note on the boundedness of Riesz transform for some subelliptic operators.} Int. Math. Res. Not. IMRN 2013, no. 2, 398Ð421.

%\bibitem{BK}
%F. Baudoin \& B. Kim, \emph{Sobolev, Poincar\'e and isoperimetric inequalities for subelliptic diffusion operators satisfying a generalized curvature dimension inequality}, Revista Matematica Iberoamericana, 30, (2014), 1, 109-131

%\bibitem{BK2}
%F. Baudoin \& B. Kim, \emph{The Lichnerowicz-Obata theorem on sub-Riemannian manifolds with transverse symmetries,} 2014, To appear in Journal of Geometric Analysis

\bibitem{BKW}
F. Baudoin, B. Kim \& J. Wang, \emph{Transverse Weitzenb\"ock formulas and curvature dimension inequalities on Riemannian foliations with totally geodesic leaves}, http://arxiv.org/abs/1408.0548

%\bibitem{BeBo}
%L. B\'erard-Bergery, J.P. Bourguignon,  \emph{Laplacians and Riemannian submersions with totally geodesic fibres}. Illinois J. Math. 26 (1982), no. 2, 181-200

\bibitem{Besse} A. Besse, \emph{Einstein manifolds.}  Reprint of the 1987 edition. Classics in Mathematics. Springer-Verlag, Berlin, 2008. xii+516 pp.


\bibitem{CGZ} P.\ Cattiaux, A.\ Guillin, P.A.\ Zitt
\emph{Poincar\'e inequalities and hitting times},    Ann. Inst. H. Poincar\'e Probab. Statist., Volume 49, Number 1 (2013), 95-118.

\bibitem{Elworthy} K.D. Elworthy, \emph{Decompositions of diffusion operators and related couplings}, preprint 2014

\bibitem{GT1} E. Grong, A. Thalmaier, \emph{Curvature-dimension inequalities on sub-Riemannian manifolds obtained from Riemannian foliations, Part I}, http://arxiv.org/abs/1408.6873

\bibitem{GT2} E. Grong, A. Thalmaier, \emph{Curvature-dimension inequalities on sub-Riemannian manifolds obtained from Riemannian foliations, Part II}, http://arxiv.org/abs/1408.6872


\bibitem{Hladky} R. Hladky, \emph{Connections and Curvature in sub-Riemannian geometry}. Houston J. Math, 38 (2012), no. 4, 1107-1134

%\bibitem{Kim} B. Kim, \emph{Poincar\'e inequality and the uniqueness of solutions for the heat equation associated with subelliptic diffusion operators}, http://arxiv.org/abs/1305.0508

%\bibitem{Oneill} B. O'Neill, \emph{The fundamental equations of a submersion}, Michigan Math. J. 13 (1966) 459-469.

    
\bibitem{OV2}    
\bysame,  \emph{Comment on: Hypercontractivity of Hamilton-Jacobi equations}
[J. Math. Pures Appl. (9) 80 (2001), no. 7, 669696] by S. G. Bobkov, I.
Gentil and M. Ledoux. J. Math. Pures Appl. (9) 80, 7 (2001), 697700.

%\bibitem{Tondeur} P.  Tondeur, \emph{Foliations on Riemannian manifolds}.  Universitext. Springer-Verlag, New York, 1988. xii+247 pp

%\bibitem{Tondeur2} P.  Tondeur, \emph{Geometry of foliations}.  Monographs in Mathematics, 90. Birkhuser Verlag, Basel, 1997. viii+305 pp.

\bibitem{W1} F. Y. Wang, \emph{Logarithmic Sobolev inequalities on noncompact Riemannian manifolds}. Probab. Theory Related Fields \textbf{109} (1997), no. 3, 417424.

\bibitem{W2}  \bysame, \emph{Dimension-free Harnack inequality and its applications.} 
Front. Math. China 1 (2006), no. \textbf{1}, 5372. 

\bibitem{W3}  \bysame, \emph{Harnack inequalities on manifolds with boundary and applications.} J. Math. Pures Appl. (9) (2010), Vol. 94,  304--321.

\end{thebibliography}
\end{document}